\newtheorem{Theorem}{Theorem}[section]
\newtheorem{Corollary}[Theorem]{Corollary}
\newtheorem{Proposition}[Theorem]{Proposition}
\newtheorem{Remark}[Theorem]{Remark}
\def \dim{{\mbox {dim}}\,}
\def\R\re
\def\V{\bf V}
\def \e{\varepsilon}
\def \re{{\mathbb R}}
\def \0{\lambda_{0}}
\def \N{{\mathbb N}}
\begin{document}
\title[]{The Equivariant Second Yamabe Constant}

\author[G. Henry]{Guillermo Henry}\thanks{G. Henry is partially supported by grants 20020120200244BA from  Universidad de Buenos Aires and  PIP 11220110100742 from CONICET
} 
 \address{Departamento de Matem\'atica, FCEyN, Universidad de Buenos
Aires, Ciudad Universitaria, Pab. I., C1428EHA,
          Buenos Aires, Argentina, and CONICET, Argentina.}
\email{ghenry@dm.uba.ar}

\author[F. Madani]{Farid Madani}\thanks{}
 \address{Institut f\"{u}r Mathematik,
Goethe Universit\"{a}t Frankfurt, Robert-Mayer-Str. 10
60325 Frankfurt am Main,  Germany}
\email{madani@math.uni-frankfurt.de}

\subjclass[2010]{53C21}

\date{}

\begin{abstract} 
 For a  closed Riemannian manifold of dimension $n\geq 3$ and a subgroup $G$ of the isometry group,  we define and study the $G-$equivariant second Yamabe constant and we
obtain some results on the existence of $G-$invariant nodal solutions of the Yamabe equation.

\end{abstract}

\maketitle

\section{Introduction}

Let $(M^n,g)$ be a closed Riemannian manifold of dimension  $n\geq 3$. Let $[g]$ be the conformal class of $g$. Consider $G$  a compact subgroup of the isometry group $I(M,g)$. 
The $G-$invariant conformal class $[g]_G$ is the  subset of  $G$-invariant Riemannian metrics of $[g]$, {\em i.e.}, 
$$[g]_G:=\{fg:\ f\in C^{\infty}_{>0}(M),\ \sigma^*(f)=f\ \forall\ \sigma \in G\}.$$ 

The Yamabe functional $J:[g]\longrightarrow \re$ is  defined by:  
$$h\in [g]\longrightarrow J(h):=\frac{ \int_M s_h dv_h}{vol(M,h)^{\frac{n-2}{n}}},$$
where $s_h$ denotes the scalar curvature fo $h$.
The infimum of $J$ over $[g]$ is the conformal invariant $Y(M,[g])$ called  the Yamabe constant of $(M,[g])$. If
 we take the infimum of $J$ over $[g]_G$ instead of $[g]$, we obtain the $G-$equi\-va\-riant Yamabe constant:
$$Y_G(M,[g]_G):=\inf_{h\in [g]_G}J(h).$$ 
If $G$ is the trivial group, $Y_G(M,[g]_G)$ and $Y(M,[g])$ are equal. In case when  the constant $Y_G(M,[g]_G)$ is attained by a Riemannian metric $h$, then $h$ is $G-$invariant and has  constant scalar curvature. 

For a point $P\in M$, we denote by $O_G(P)$  the orbit of $P$ induced by the action of the group $G$. It is well known that $O_G(P)$ is a  compact submanifold of $M$. 
Throughout this article, we  use $\Lambda_{G}$ to denote the infimum of the cardinality of all the orbits induced by  the action of the group $G$:
$$\Lambda_G:=\inf_{P\in M}card \ O_G(P).$$ The $G-$equivariant Yamabe constant of  $(M,g)$ is bounded from above by 
$$Y_G(M,[g]_G)\leq Y(S^n)\Lambda_{G}^{\frac{2}{n}},$$
where $Y(S^n)$ is the Yamabe constant of $S^n$ endowed with the round metric $g^n_0$ (see \cite{Hebey-Vaugon2}).

Hebey and Vaugon proved in \cite{Hebey-Vaugon2} that the $G-$equivariant Yamabe constant is attained if the above inequality is strict.  
Moreover, they conjectured  that if $(M,g)$ is not conformal to the round sphere or if the action of $G$ has no fixed point, then 
\begin{equation}\label{H-Vconjecture}
 Y_G(M,[g]_G)<Y(S^n)\Lambda_G^{\frac{2}{n}}.
\end{equation}
This assertion is known in literature 
as the Hebey-Vaugon conjecture, and it is trivially satisfied when $\Lambda_G=\infty$ or $Y(M,[g])\leq 0$. 

Note that if the Yamabe constant is non-positive,  then there exists only one 
metric (of a given volume) of constant scalar curvature  in the conformal class $[g]$. This metric is invariant by the whole isometry group, and in particular it is invariant for any subgroup.  Therefore,  $Y_G(M,[g]_G)=Y(M,[g])$ for any $G\subseteq I(M,g)$, and Inequality \eqref{H-Vconjecture} holds.   

Assuming the Positive Mass Theorem (PMT) for higher dimensions,  He\-bey and Vaugon proved in \cite{Hebey-Vaugon2} the conjecture for some cases, {\em i.e.}, the action of the group $G$ is free; $3\leq \dim (M)\leq 11$;  there exists $P\in M$, with minimal finite orbit, such that $\omega(P)\geq (n-6)/2$ or $\omega(P)\in\{0,1,2\}$ (see the definition of $\omega(P)$ below).
 In \cite{Madani2}, the second author proved (without using the PMT) that  \eqref{H-Vconjecture}  holds, if there exists $P\in M$, with a minimal finite orbit, such that $\omega(P)\leq (n-6)/2$. Therefore, combining these results we conclude that the Hebey-Vaugon  conjecture is true, if we assume the PMT. 

 We say that $u$ is a solution of the Yamabe equation if for some constant $c$, the function $u$ satisfies 
\begin{equation}\label{Yamabeeq}
 L_g(u)=c|u|^{p_n-2}u,
\end{equation}
 where $L_g:=a_n\Delta_g+s_g$ is the conformal Laplacian of $(M,g)$, $a_n:=\frac{4(n-1)}{n-2}$, and $p_n:=\frac{2n}{n-2}$.
 A nodal solution of the Yamabe equation of $(M,g)$ is a solution of \eqref{Yamabeeq}
that changes sign. Recall that positive solutions of Yamabe equation are related with Riemannian metrics of constant scalar curvature in the conformal class of $[g]$. More precisely, if $u$ is a smooth positive solution of \eqref{Yamabeeq}, then $u^{p_n-2}g$ has constant scalar curvature $c$. By the resolution of the Yamabe problem, due to Yamabe \cite{Yamabe}, Tr\"{u}dinger \cite{Trudinger}, Aubin \cite{Aubin}, and Schoen \cite{Schoen1}, we know that there exists a positive solution of  \eqref{Yamabeeq} if and only if both  $Y(M,[g])$ and the constant $c$ have the same sign. However, it seems important to study the set of nodal solutions in order to understand the whole set of solutions of the Yamabe equation. 
Several authors have studied  nodal solutions of Yamabe type equations: Hebey and Vaugon \cite{Hebey-Vaugon4}, Holcman \cite{Holcman}, Jourdain \cite{Jourdain}, Djadli and Jourdain \cite{Djadli-Jourdain}, Ammann and Humbert \cite{Ammann-Humbert}, Petean \cite{Peteannodal}, and  El Sayed \cite{ElSayed},  just to mention some of them.

In \cite{Ammann-Humbert},  Ammann and Humbert introduced and studied the second Yamabe constant, which is  defined by 

$$Y^2(M,[g]):=\inf_{h\in [g]}\lambda_2(h)vol(M,h)^{\frac{2}{n}},$$

where   $\lambda_2(h)$ is the second eigenvalue of $L_h$. 

This constant is related with  nodal solutions of the Yamabe equation. They proved that for  a closed connected Riemannian manifold $(M,[g])$, with non-negative Yamabe constant, the second Yamabe constant in never realized by a Riemmannian metric. However, there is a chance to attain  $Y^2(M,[g])$, if we enlarge the conformal class $[g]$, by adding  symmetric tensors of the form 
$u^{p_n-2}g$, where $u\neq 0$ is a non-negative function which belongs to $L^{p_n}(M)$. These tensors are called {\it generalized metrics}. When $Y^2(M,[g])$ is positive and is attained by a generalized metric $u^{p_n-2}g$, Ammann and Humbert, in \cite{Ammann-Humbert}, proved that $u=|w|$, where $w$ is a nodal solution of the Yamabe equation on $(M,g)$.

In this article we are concerned by the  $G-$equivariant version of the second Yamabe constant. We assume that the action  of $G$ on $M$ is not transitive. Otherwise, the constant functions would be the only $G-$invariant functions, and therefore $s_g$ will be the only one eigenvalue of $L_g$ restricted to the set of $G-$invariant functions.  We denote by   
$\lambda_{2,G}(h)$ the second eigenvalue of the conformal Laplacian $L_h$ restricted to the Sobolev space $H_{1,G}^2(M)$ ({\em i.e.}, the set of $G-$invariant functions in $L^2(M)$, whose differential is also in $L^2(M)$). We define the $G-$equivariant second Yamabe 
constant as 

$$Y^2_G(M,[g]_G):=\inf_{h\in [g]_G}\lambda_{2,G}(h)vol(M,h)^{\frac{2}{n}}.$$
Note that when $G=\{Id_M\}$, $Y^2_G(M,[g]_G)=Y^2(M,[g])$.

Let $W$ be the Weyl tensor of $(M,g)$.  We define $\omega:M\longrightarrow \N_0 \cup \{+\infty\}$ as follows: 
If there exists $l_0\in \N_0$ such that $|\nabla^{l_0} W(P)|\neq 0$, then

$$\omega(P):= \inf\{l\in \N_0: \  |\nabla^l W(P)|\neq 0\}.$$

If $|\nabla^l W|(P)=0,\ \forall\ l\in \N_0$, then we set $$\omega(P):=+\infty.$$

For instance, if $(M,g)$ is locally conformally flat ($\dim(M)\geq 3$), then $\omega(P)=+\infty$ for any $P\in M$. If $M$ is not locally conformally flat, there exists $P\in M$ such that $\omega(P)=0$.

Our first result is the following theorem

\begin{Theorem}\label{strictinequalityA} 
Let $(M,g)$ be a closed Riemannian manifold of dimension $n\ge 3$ and $G$ be a closed subgroup of $I(M,g)$. Assume $Y_G(M,[g]_G)\geq 0$. 
The following strict inequality holds
\begin{equation}\label{E2ndYbounds}
 Y^2_G(M,[g]_G)< \Big(Y_G(M,[g]_G)^{\frac{n}{2}} +Y(S^n)^{\frac{n}{2}}\Lambda_G\Big)^{\frac{2}{n}}
\end{equation}
if one of the following items is satisfied:
\begin{enumerate}
\item[i)]  $\Lambda_G=\infty$. 
\item[ii)]  There exists a point $P\in M$ that belongs to a  minimal finite orbit, such that $\omega(P)\leq \frac{n-2}{6}$ and 
\begin{itemize}
\item $\dim (M)\geq 11$ if $Y_G(M,[g]_G)>0$.
\item  $\dim (M)\geq 9$ if $Y_G(M,[g]_G)=0$.
 \end{itemize}
\end{enumerate}
\end{Theorem}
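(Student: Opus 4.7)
The strategy follows the Ammann-Humbert method, adapted to the $G$-equivariant setting: for each small $\epsilon>0$ we construct a generalized $G$-invariant metric $\tilde{g}_\epsilon \in [g]_G$ (conformal factor only in $L^{p_n}$) together with a two-dimensional $G$-invariant test subspace of $H_{1,G}^2(M)$, and extract an upper bound on $\lambda_{2,G}(\tilde{g}_\epsilon)\,\mathrm{vol}(M,\tilde{g}_\epsilon)^{2/n}$ via the min-max characterization of the second eigenvalue. Case (i) is immediate: when $\Lambda_G=\infty$, the right-hand side of \eqref{E2ndYbounds} is $+\infty$, while $Y^2_G(M,[g]_G)\le \lambda_{2,G}(g)\,\mathrm{vol}(M,g)^{2/n}<\infty$ since the non-transitivity assumption on $G$ makes $H_{1,G}^2(M)$ infinite-dimensional and $\lambda_{2,G}(g)$ well defined.

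For case (ii), fix a point $P$ in a minimal finite orbit $O_G(P)$, $|O_G(P)|=\Lambda_G$, with $\omega(P)\le (n-2)/6$. Under the given dimension hypotheses, $(n-2)/6\le (n-6)/2$, so we are within the range of Madani's theorem and hence $Y_G<Y(S^n)\Lambda_G^{2/n}$; by Hebey-Vaugon's existence result, $Y_G$ is then attained by a smooth positive $G$-invariant minimizer $v_0$ with $L_g v_0=Y_G v_0^{p_n-1}$ and $\|v_0\|_{p_n}=1$ (in the $Y_G=0$ case we first replace $g$ by a scalar-flat representative in $[g]_G$ and take $v_0$ constant). Following Madani, build a $G$-invariant bubble $u_\epsilon=\sum_{Q\in O_G(P)}\widetilde u_\epsilon^Q$ as a sum of appropriately cut-off and corrected Aubin bubbles, one centered at each point of the orbit; the hypothesis $\omega(P)\le (n-2)/6$ with the dimension restriction gives, for some constants $C,\alpha>0$ and all small $\epsilon$,
\[
\int_M u_\epsilon L_g u_\epsilon\,dv_g \le \bigl(Y(S^n)\Lambda_G^{2/n}-C\epsilon^\alpha\bigr)\|u_\epsilon\|_{p_n}^2.
\]

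Normalize $\|u_\epsilon\|_{p_n}=1$ and take the generalized metric $\tilde g_\epsilon=(Av_0+Bu_\epsilon)^{p_n-2}g$ together with the $G$-invariant test plane $V_\epsilon=\mathrm{span}(v_0,u_\epsilon)$, with free parameters $A,B>0$. By the conformal change formula and min-max, $\lambda_{2,G}(\tilde g_\epsilon)$ is bounded above by the largest generalized eigenvalue of the $2\times 2$ pencil with entries $\int\psi_i L_g\psi_j\,dv_g$ and $\int\psi_i\psi_j(Av_0+Bu_\epsilon)^{p_n-2}\,dv_g$, where $\psi_1=v_0,\psi_2=u_\epsilon$. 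Concentration of $u_\epsilon$ on the finite orbit forces all off-diagonal entries to be $o_\epsilon(1)$ in each block, while the diagonal Rayleigh quotients tend to $R_v=Y_GA^{2-p_n}$ and $R_u\le (Y(S^n)\Lambda_G^{2/n}-C\epsilon^\alpha)B^{2-p_n}$, and $\mathrm{vol}(\tilde g_\epsilon)^{2/n}\to (A^{p_n}+B^{p_n})^{2/n}$. Choosing $A,B$ so that $R_v=R_u$ when $Y_G>0$ (or letting $A\to 0^+$ at a rate coupled to $\epsilon$ when $Y_G=0$), a short algebraic manipulation using $p_n(n-2)=2n$ collapses $\max(R_v,R_u)(A^{p_n}+B^{p_n})^{2/n}$ to $(Y_G^{n/2}+Y(S^n)^{n/2}\Lambda_G)^{2/n}$; altogether
\[
\lambda_{2,G}(\tilde g_\epsilon)\,\mathrm{vol}(\tilde g_\epsilon)^{2/n} \le \bigl(Y_G^{n/2}+Y(S^n)^{n/2}\Lambda_G\bigr)^{2/n}-\delta+o_\epsilon(1)
\]
for some $\delta>0$, and the strict inequality \eqref{E2ndYbounds} follows for $\epsilon$ sufficiently small.

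The main obstacle is the careful control of the cross-terms: each of $\int v_0 L_g u_\epsilon$, $\int v_0 u_\epsilon(Av_0+Bu_\epsilon)^{p_n-2}$, and the next-order expansion of $\int u_\epsilon^2(Av_0+Bu_\epsilon)^{p_n-2}$ must decay strictly faster than the bubble gain $\epsilon^\alpha$. The dimension thresholds $n\ge 11$ (for $Y_G>0$) and $n\ge 9$ (for $Y_G=0$) are calibrated precisely so that the resulting error bounds remain compatible with the strict improvement; in particular, in the $Y_G=0$ case the parameter $A$ must be sent to zero at a rate coupled to $\epsilon$ to suppress the $(A^{p_n}+B^{p_n})^{2/n}$ factor without amplifying the relative magnitude of the cross-terms. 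Adapting the generalized-metric framework of Ammann-Humbert to enforce $G$-invariance is otherwise a routine translation.
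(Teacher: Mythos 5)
Your proposal is correct and follows essentially the same route as the paper: case (i) is dismissed as trivial, and for case (ii) the paper likewise takes the Madani-type $G$-invariant bubble $\psi_\e$ summed over a minimal finite orbit, forms exactly your ``balanced'' combination $u_\e=J(\psi_\e)^{\frac{n-2}{4}}\psi_\e+Y_G(M,[g]_G)^{\frac{n-2}{4}}\varphi$ (your choice $R_v=R_u$) with test plane ${\rm span}(\psi_\e,\varphi)$, bounds the Rayleigh quotients by $1+O(\e^{\frac{n-2}{2}})$ and the volume factor by $\big(Y_G^{n/2}+Y(S^n)^{n/2}\Lambda_G\big)^{2/n}-\tilde C\e^{2\omega+4}$, and uses the dimension thresholds to make the bubble gain beat the cross-term errors. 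The only cosmetic difference is that for $Y_G=0$ the paper sets your parameter $A$ equal to $0$ outright (working with the degenerate generalized metric $\psi_\e^{p_n-2}g$) rather than sending $A\to 0^+$ at an $\e$-dependent rate.
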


\begin{Remark} Actually, when $G=\{Id_M\}$ and $(M,g)$ is not locally conformally flat, Theorem \ref{strictinequalityA} was proved by Ammann and Humbert \emph{(}Theorem 1.5  of \cite{Ammann-Humbert}\emph{).}          
\end{Remark}
 
If we do not assume any conditions on the orbits induced by the action of $G$, we have the following lower and upper bounds for the $G-$equivariant second Yamabe constant: 
 
\begin{Proposition}\label{boundsproposition} Let $(M,g)$ be a closed Riemannian manifold of dimension $n\ge 3$ and $G$ be a closed subgroup of $I(M,g)$. We assume that $Y_G(M,[g]_G)$ is attained and non-negative. Then, we have 
\begin{equation}\label{E2ndYboundslarge}
 2^{\frac{2}{n}}Y_G(M,[g]_G)\leq Y^2_G(M,[g]_G)\leq \Big(Y_G(M, [g]_G)^{\frac{n}{2}} +Y(S^n)^{\frac{n}{2}}\Lambda_G\Big)^{\frac{2}{n}}.
\end{equation}

\end{Proposition}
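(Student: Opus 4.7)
The plan is to prove the two inequalities separately. The lower bound follows from a Rayleigh-quotient argument applied to the positive and negative parts of a $\lambda_{2,G}(h)$-eigenfunction, while the upper bound is produced by a concrete family of test generalized $G$-invariant metrics built from the $Y_{G}$-realizer and a sum of bubbles placed $G$-equivariantly at a minimal orbit.

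\textbf{Lower bound.} Fix $h\in [g]_{G}$ and let $\psi\in H^{2}_{1,G}(M)$ be an eigenfunction associated to $\lambda_{2,G}(h)$. Since $\lambda_{1,G}(h)$ is simple with a strictly positive eigenfunction, $\psi$ must change sign and both $\psi^{\pm}$ belong to $H^{2}_{1,G}$ with disjoint supports. A standard integration by parts yields
\begin{equation*}
\int_{M}\psi^{\pm}L_{h}\psi^{\pm}\,dv_{h}=\lambda_{2,G}(h)\int_{M}(\psi^{\pm})^{2}\,dv_{h}.
\end{equation*}
Combining the variational bound $\int_{M}fL_{h}f\,dv_{h}\ge Y_{G}(M,[g]_{G})(\int_{M}|f|^{p_{n}}dv_{h})^{2/p_{n}}$ (valid for any $f\in H^{2}_{1,G}$) with H\"older's inequality on $\mathrm{supp}(\psi^{\pm})$ gives $\lambda_{2,G}(h)\ge Y_{G}/\mathrm{vol}(\mathrm{supp}(\psi^{\pm}),h)^{2/n}$. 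Since the two supports are disjoint in $M$, at least one has $h$-volume $\le \mathrm{vol}(M,h)/2$, so $\lambda_{2,G}(h)\mathrm{vol}(M,h)^{2/n}\ge 2^{2/n}Y_{G}$; taking the infimum over $h$ proves the left inequality.

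\textbf{Upper bound.} We may assume $\Lambda_{G}<\infty$, as otherwise the bound is trivial. Let $h_{0}=u_{0}^{p_{n}-2}g$ with $u_{0}>0$ a $G$-invariant realizer of $Y_{G}$ normalized by $\int u_{0}^{p_{n}}dv_{g}=1$, and list a minimal orbit as $\{P_{1},\dots,P_{\Lambda_{G}}\}$. Placing a rescaled Aubin bubble of concentration $\varepsilon$ in normal coordinates at each $P_{i}$ and summing $G$-equivariantly produces $v_{\varepsilon}\in H^{2}_{1,G}$ with $\int v_{\varepsilon}^{p_{n}}dv_{g}\to \Lambda_{G}$ and $\int v_{\varepsilon}L_{g}v_{\varepsilon}\,dv_{g}\to \Lambda_{G}Y(S^{n})$. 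A $G$-invariant cutoff vanishing in a tube around the orbit converts $u_{0}$ into $\tilde u_{\varepsilon}$ with $\mathrm{supp}(\tilde u_{\varepsilon})\cap \mathrm{supp}(v_{\varepsilon})=\emptyset$, $\int\tilde u_{\varepsilon}^{p_{n}}dv_{g}\to 1$, and $\int\tilde u_{\varepsilon}L_{g}\tilde u_{\varepsilon}\,dv_{g}\to Y_{G}$. For free parameters $\rho_{1},\rho_{2}>0$ set $u_{\varepsilon}:=\rho_{1}\tilde u_{\varepsilon}+\rho_{2}v_{\varepsilon}$ and use $V_{\varepsilon}=\mathrm{span}(\rho_{1}\tilde u_{\varepsilon},\rho_{2}v_{\varepsilon})\subset H^{2}_{1,G}$ as test subspace. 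Disjoint supports make all cross terms vanish, so the min-max characterization gives
\begin{equation*}
\lambda_{2,G}(u_{\varepsilon}^{p_{n}-2}g)\le \max\!\bigl(\rho_{1}^{2-p_{n}}Y_{G},\,\rho_{2}^{2-p_{n}}Y(S^{n})\bigr)+o(1),
\end{equation*}
while $\mathrm{vol}(M,u_{\varepsilon}^{p_{n}-2}g)=\rho_{1}^{p_{n}}+\rho_{2}^{p_{n}}\Lambda_{G}+o(1)$. Equating the two arguments of $\max$ when $Y_{G}>0$, or taking $\rho_{1}\downarrow 0$ when $Y_{G}=0$, a short computation yields $\lambda_{2,G}(u_{\varepsilon}^{p_{n}-2}g)\mathrm{vol}(M,u_{\varepsilon}^{p_{n}-2}g)^{2/n}\le \bigl(Y_{G}^{n/2}+Y(S^{n})^{n/2}\Lambda_{G}\bigr)^{2/n}+o(1)$; perturbing $u_{\varepsilon}$ by a small positive $G$-invariant function gives honest metrics, and letting $\varepsilon\to 0$ completes the argument.

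The main technical obstacle is the $G$-equivariant bubble construction at $\{P_{1},\dots,P_{\Lambda_{G}}\}$: each bubble must be placed in coordinates compatible with the isotropy action at $P_{i}$ so that $v_{\varepsilon}$ is genuinely $G$-invariant, and the cutoff must also be chosen $G$-invariantly. Once strict disjointness of $\mathrm{supp}(\tilde u_{\varepsilon})$ and $\mathrm{supp}(v_{\varepsilon})$ is achieved the cross terms drop out and the estimate reduces to the classical single-bubble asymptotics together with the minimization property of $u_{0}$.
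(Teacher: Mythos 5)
Your proof is correct and follows essentially the same route as the paper: the lower bound is the Ammann--Humbert Proposition~5.6 argument (positive/negative parts of a second eigenfunction, the conformally invariant Yamabe bound, and H\"older on the two disjoint supports), and the upper bound tests $\lambda_{2,G}$ on the span of the $Y_G$-minimizer and the $G$-equivariant sum of Aubin bubbles at a minimal orbit, with exactly the same $1/(p_n-2)$-power weights that appear in the paper's test function $w_\varepsilon$. The only deviation is technical: you truncate $u_0$ to force the two supports to be disjoint (so the $2\times 2$ quadratic forms diagonalize and one optimizes $\rho_1,\rho_2$ explicitly, then perturbs back to honest metrics), whereas the paper keeps the overlapping supports and controls the cross terms by $O(\varepsilon^{(n-2)/2})$ via the pointwise bound \eqref{bound}; both devices work, provided in your $Y_G=0$ case one lets $\varepsilon\to 0$ before $\rho_1\to 0$ so that the truncation error $\rho_1^{2-p_n}O(\delta^{n-2})$ stays under control.
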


\begin{Remark} The assumption \lq\lq$Y_G(M,[g]_G)$ is attained\rq\rq in Proposition \ref{boundsproposition} can be removed, if we assume the Positive Mass Theorem for higher dimensions. Namely, under this assumption, the results proven in \cite{Hebey-Vaugon2} and \cite{Madani,  Madani2} imply that the Hebey-Vaugon conjecture is true and therefore, $Y_G(M,[g]_G)$ is attained. This is the case for instance, when $M$ admits a spin structure.  
\end{Remark}

With  $ L^{p_n}_{G,\geq 0}(M)$ we denote the non-negative $G-$invariant functions that belong to $L^{p_n}(M)-\{0\}$. We define the $G-$invariant generalized conformal class $[g]_{G,gen}$ by 

$$[g]_{G,gen}:=\{u^{p_n-2}g:\ u\in L^{p_n}_{G,\geq 0}(M)  \},$$
and any element of $[g]_{G,gen}$ is called a {\em $G$-generalized metric}.

Using the variational characterization of the eigenvalues of the conformal Laplacian, we  enlarge the definition of $\lambda_{2,G}(h)$  to the set of $G$-generalized metrics. We do the the same extension for $vol(M,h)$.  Therefore, as well as for the second Yamabe constant (see \cite{Ammann-Humbert}), we have 
\begin{equation}\label{2ndYamabegen}Y^2_G(M,[g]_G)=\inf_{h\in [g]_{G,gen} }\lambda_{2,G}(h)vol(M,h)^{\frac{2}{n}}.  
\end{equation}

The relationship between the $G-$equivariant second Yamabe constant and the nodal solutions of the Yamabe equation is given by the following result:

\begin{Theorem}\label{nodalsolution} Let $(M,g)$ be a closed Riemannian manifold of dimension $n\ge 3$ and $G$ be a closed subgroup of $I(M,g)$. Assume that $Y_G(M,[g]_G)>0$ and $Y_G^2(M,[g]_G)$ is attained by a $G-$generalized metric $h=u^{p_n-2}g$. Then,  
$u=|w|$, where $w\in C^{3,\alpha}(M)\cap C^{\infty}(M-\{w=0\})$, with   $0<\alpha\leq 4/(n-2)$, is a $G-$invariant nodal solution of the Yamabe equation.  In particular, if  $\|u\|_{p_n}=1$, then 
$$L_g(w)=Y_G^2(M,[g]_G)|w|^{p_n-2}w.$$  
\end{Theorem}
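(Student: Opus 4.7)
My plan is to follow the strategy of Ammann--Humbert (proof of Theorem 2.1 in \cite{Ammann-Humbert}), adapted to the $G$-equivariant setting by systematically working within $H^2_{1,G}(M)$ and using $G$-invariant test functions. Normalize $u$ so that $\|u\|_{p_n}=1$; then $vol(M,h)=1$ and the attained value equals $\lambda_{2,G}(h)=Y^2_G(M,[g]_G)$. The hypothesis $Y_G(M,[g]_G)>0$ makes $L_g$ positive on $H^2_{1,G}(M)$, so the weighted $G$-invariant eigenvalue problem $L_g\phi=\lambda u^{p_n-2}\phi$ has a discrete positive spectrum; one selects a first $G$-invariant eigenfunction $v_1>0$ by minimization and the maximum principle, and a second $G$-invariant eigenfunction $v_2$ orthogonal to $v_1$ in the $u^{p_n-2}$-weighted inner product. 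Since $v_1$ is strictly positive, $v_2$ must change sign, and both satisfy $L_g v_i=\lambda_{i,G}(h)u^{p_n-2}v_i$ weakly.

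The heart of the proof is a first-variation computation. For any $G$-invariant $f$ with $\int_M u^{p_n-1}f\,dv_g=0$ (tangency to the sphere $\|u\|_{p_n}=1$), perturbation theory for the second weighted eigenvalue, assumed simple for the moment, gives
\begin{equation*}
\frac{d}{dt}\lambda_{2,G}(u_t^{p_n-2}g)\Big|_{t=0}
= -(p_n-2)\,\lambda_{2,G}(h)\,\frac{\int_M u^{p_n-3}fv_2^2\,dv_g}{\int_M u^{p_n-2}v_2^2\,dv_g},
\end{equation*}
where $u_t=u+tf+O(t^2)$. Minimality of $u$ forces this to vanish for all admissible $f$; a Lagrange-multiplier argument then yields $v_2^2=Cu^2$ almost everywhere on $\mathrm{supp}(u)$, with $C=\|v_2\|_{p_n}^{-2}$ forced by the normalization. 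Hence $u=|v_2|/\|v_2\|_{p_n}$; setting $w:=v_2/\|v_2\|_{p_n}$ one has $|w|=u$, and substituting into $L_g v_2=\lambda_{2,G}(h)u^{p_n-2}v_2$ gives
\begin{equation*}
L_g w = Y^2_G(M,[g]_G)\,|w|^{p_n-2}w,
\end{equation*}
so $w$ is a $G$-invariant nodal solution of the Yamabe equation.

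For the regularity, the nonlinearity $w\mapsto|w|^{p_n-2}w$ is globally H\"older continuous of exponent $\min(1,4/(n-2))$, so the usual iterative $L^p$-estimates followed by Schauder theory upgrade the weak $H^2_{1,G}$-solution $w$ to $C^{3,\alpha}(M)$ for some $0<\alpha\le 4/(n-2)$; on the open set $\{w\neq 0\}$ the nonlinearity is smooth in $w$, so elliptic bootstrapping gives $w\in C^\infty(M\setminus\{w=0\})$. I expect the main technical obstacle to be justifying the first-variation formula when $u$ vanishes on a set of positive measure (where the factor $u^{p_n-3}$ is singular for $n\ge 6$) and handling the possible multiplicity of $\lambda_{2,G}(h)$; both issues are addressed in the non-equivariant case of \cite{Ammann-Humbert} via approximation of $u$ and a careful choice inside the eigenspace, and they transfer to the $G$-invariant setting because all the constructions (minimization, orthogonal projection, $L^p$ perturbation) respect the $G$-action.
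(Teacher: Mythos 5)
Your overall strategy --- first eigenfunction $v_1>0$, sign-changing second eigenfunction $v_2$, an Euler--Lagrange identity coming from minimality, then regularity --- is the same as the paper's, which likewise follows Ammann--Humbert's Theorem~3.4. But there is a genuine gap at the step ``Hence $u=|v_2|/\|v_2\|_{p_n}$''. Even granting the technical points you defer (simplicity of $\lambda_{2,G}$, the singular factor $u^{p_n-3}$), your first-variation/Lagrange-multiplier argument only yields $v_2^2=Cu^2$ almost everywhere on $\mathrm{supp}(u)$: admissible two-sided perturbations must keep $u_t\geq 0$, hence must be supported where $u>0$, so you obtain no information on $\{u=0\}$, and nothing in your argument rules out $v_2\neq 0$ there --- on the interior of $\{u=0\}$ the eigenvalue equation only says $L_gv_2=0$, which does not force $v_2$ to vanish. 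The paper closes exactly this hole with the structural result it quotes from Ammann--Humbert (Lemma~3.3 there, Equation~\eqref{ee} here): since $w_1=\sup(0,v_2)$ and $w_2=\sup(0,-v_2)$ are non-negative, have essentially disjoint supports, and satisfy the extremal inequalities \eqref{LIY1}--\eqref{LIY2}, one gets $u=a\sup(0,v_2)+b\sup(0,-v_2)$ with $a,b>0$. This identifies $\mathrm{supp}(u)$ with $\mathrm{supp}(v_2)$, reduces the whole problem to showing $a=b$, and, via the bootstrap $v_2\in C^{2,\alpha}_G(M)$, supplies the continuity of $u$ that is needed before the Euler--Lagrange identity \eqref{condition} can be localized pointwise. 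Without this lemma your concluding ``Hence'' does not follow.

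A secondary point: the genuinely new equivariant content of the paper's proof is the passage from the integral identity \eqref{condition}, which is only tested against $G$-invariant functions supported in $\mathrm{supp}(u)$, to the pointwise identity $u^{p_n-3}v_2^2=u^{p_n-1}$. When the orbit of a point is infinite this requires $G$-invariant cut-offs supported in a tubular neighbourhood of the orbit, which the paper builds with the slice theorem; your remark that all constructions ``respect the $G$-action'' is where this work is hidden. You should either carry out that construction, or note that since the integrand $u^{p_n-3}v_2^2-u^{p_n-1}$ is itself $G$-invariant and continuous, one may average an arbitrary non-negative bump function over $G$ without changing its integral against that integrand, which gives the pointwise conclusion directly.
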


\begin{Remark} Theorem \ref{nodalsolution} implies that the $G-$equivariant second Yamabe constant of a connected closed Riemannian manifold  with $Y_G^2(M,[g]_G)>0$  is never a\-ttai\-ned by a Riemannian metric. Indeed, if we assume that $Y_G^2(M,[g]_G)$  is attained by $h$, then by Theorem \ref{nodalsolution}, we have that  $h=|w|^{p_n-2}g$, with $w$ a  changing sign function. Since $M$ is connected, $w$ must be zero somewhere, and therefore $h$ is not a Riemannian metric. 
\end{Remark}

Now, we give a sufficient condition, in order that the $G-$equivariant se\-cond Yamabe constant to be attained by a generalized metric 

\begin{Theorem}\label{Y2attained} Let $(M,g)$ be a closed Riemannian manifold of dimension $n\geq 3$ and $G\subset I(M,g)$ be a compact subgroup. Assume that $Y_G(M,[g]_G)$ is non-negative. The $G-$equivariant second Yamabe constant is attained  by a generalized metric if 
$$Y^2_G(M,[g]_G)< \Big(Y_G(M,[g]_G)^{\frac{n}{2}} +Y(S^n)^{\frac{n}{2}}\Lambda_G\Big)^{\frac{2}{n}}.$$
\end{Theorem}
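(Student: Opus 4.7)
The plan is to carry out the standard direct-method argument adapted to the equivariant setting, following the blueprint Ammann and Humbert used for the second Yamabe constant in \cite{Ammann-Humbert}. First I would fix a minimizing sequence of $G$-generalized metrics $h_m = u_m^{p_n-2}g$ with $u_m \in L^{p_n}_{G,\geq 0}(M)$, $\|u_m\|_{p_n}=1$, and $\lambda_{2,G}(h_m) \to Y^2_G(M,[g]_G)$. For each $m$, the min-max characterization produces $G$-invariant eigenfunctions $\varphi_m > 0$ and $\psi_m$ of $L_g$ on $H_{1,G}^2$ associated with $\lambda_{1,G}(h_m)$ and $\lambda_{2,G}(h_m)$, normalized so that $\int_M \varphi_m^2 u_m^{p_n-2}dv_g = \int_M \psi_m^2 u_m^{p_n-2}dv_g = 1$ and $\int_M \varphi_m\psi_m u_m^{p_n-2}dv_g = 0$. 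Writing the eigenvalue equations $L_g\varphi_m = \lambda_{1,G}(h_m)u_m^{p_n-2}\varphi_m$ and $L_g\psi_m = \lambda_{2,G}(h_m)u_m^{p_n-2}\psi_m$ and using the uniform bound on $\lambda_{2,G}(h_m)$ together with the coercivity of $L_g + C$ yields uniform $H_1^2$ bounds on $\varphi_m,\psi_m$.

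Along a subsequence I would then extract weak limits $u_m \rightharpoonup u_\infty$ in $L^{p_n}$ and $\varphi_m \rightharpoonup \varphi_\infty$, $\psi_m \rightharpoonup \psi_\infty$ in $H_1^2$, all three $G$-invariant, with strong $L^q$ convergence for $q<p_n$. The decisive question is whether the weighted normalizations survive the limit: writing
\begin{equation*}
\|\varphi_m\|^2_{L^2(h_m)} = \|\varphi_\infty\|^2_{L^2(h_\infty)} + A_\varphi + o(1), \quad \|\psi_m\|^2_{L^2(h_m)} = \|\psi_\infty\|^2_{L^2(h_\infty)} + A_\psi + o(1),
\end{equation*}
with $A_\varphi,A_\psi\geq 0$ measuring the defect of compactness, the problem reduces to showing $A_\varphi = A_\psi = 0$.

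The main obstacle, and the core of the argument, is ruling out this concentration. I would argue by contradiction using a bubble analysis (in the spirit of Struwe's decomposition, or equivalently P.-L. Lions' second concentration-compactness principle) applied to the eigenvalue equations. If concentration occurs, the concentration measures of $u_m^{p_n}dv_g$, $u_m^{p_n-2}\varphi_m^2 dv_g$ and $u_m^{p_n-2}\psi_m^2 dv_g$ are supported on a finite union of $G$-orbits, and the rescalings around each concentration point converge to a standard sphere bubble. Because $G$-invariance forces bubbles to be produced simultaneously along each point of an orbit, the minimal energy of any nontrivial concentration contribution is $Y(S^n)^{n/2}\Lambda_G$. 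Combining this quantized concentration energy with the regular part, whose contribution is controlled from below by $Y_G(M,[g]_G)^{n/2}$ via the definition of the equivariant Yamabe constant, one obtains
\begin{equation*}
Y^2_G(M,[g]_G)^{n/2} \geq Y_G(M,[g]_G)^{n/2} + Y(S^n)^{n/2}\Lambda_G,
\end{equation*}
contradicting the strict inequality of the hypothesis. Hence $A_\varphi = A_\psi = 0$, the convergence is strong in the weighted norms, $u_\infty\not\equiv 0$, and $(\varphi_\infty,\psi_\infty)$ remains an orthonormal pair in $L^2(h_\infty)$. The min-max characterization \eqref{2ndYamabegen} then gives $\lambda_{2,G}(h_\infty)\,\mathrm{vol}(M,h_\infty)^{2/n} \leq Y^2_G(M,[g]_G)$, and combined with the reverse inequality from the definition, $h_\infty = u_\infty^{p_n-2}g$ attains the $G$-equivariant second Yamabe constant. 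The hardest step is the concentration analysis; the new input beyond \cite{Ammann-Humbert} is the orbit-averaging that forces each bubble to be replicated at least $\Lambda_G$ times and accounts for the additional factor $Y(S^n)^{n/2}\Lambda_G$ in the threshold.
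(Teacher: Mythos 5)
Your overall strategy is the same as the paper's (which in turn follows Ammann--Humbert): direct method on a minimizing sequence $u_k^{p_n-2}g$, extraction of the first and second eigenfunctions $v_1^k, v_2^k$ with weighted orthonormality, weak limits, and then ruling out loss of compactness by showing that concentration would force $Y^2_G(M,[g]_G)^{n/2}\geq Y_G(M,[g]_G)^{n/2}+Y(S^n)^{n/2}\Lambda_G$. Where you diverge is in how the factor $\Lambda_G$ enters. You propose an orbit-replicated bubble count: each concentration point is copied along its $G$-orbit, so the quantized concentration energy is at least $Y(S^n)^{n/2}\Lambda_G$. The paper instead feeds the equivariance into a single analytic input, the Hebey--Vaugon $G$-equivariant Sobolev inequality with best constant $Y(S^n)\Lambda_G^{2/n}$ (Theorem \ref{HVsob}), applied to Moser-type test functions $\eta_i|v_2^k|^{\varepsilon}v_2^k$ summed over a finite orbit (or over a tubular neighbourhood of an infinite orbit). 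It then observes that, with the normalization $\|u_k\|_{p_n}=1$ and the definition of a concentration point as carrying more than half the mass, there is at most one concentration point; being unique and $G$-stable it must be a global fixed point, so $\Lambda_G=1$ in the concentrating case and the remaining steps of Ammann--Humbert apply verbatim. Both routes reach the same threshold, but the paper's is more economical: your version would require you to actually establish the bubble quantization for the coupled system $L_gv_2^k=\lambda_{2,G}u_k^{p_n-2}v_2^k$, which is not a standard Struwe decomposition because $u_k$ is not itself a solution of an elliptic equation, and you would also need to justify that the regular part is genuinely nontrivial (i.e.\ not all of the mass of $u_k^{p_n}$ concentrates), which is what guarantees the $Y_G(M,[g]_G)^{n/2}$ term in the lower bound; in the paper this is exactly the content of the imported Steps 4--6 of Ammann--Humbert. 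These are fillable but nontrivial gaps in your sketch rather than errors; the skeleton is sound and also covers the $\Lambda_G=\infty$ case, where compact embedding of $H^2_{1,G}(M)$ into $L^{p_n}(M)$ makes the whole discussion unnecessary.
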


As a consequence of Theorem \ref{Y2attained} and Proposition \ref{nodalY2=0} (see Section \ref{sectionnodalsolutions}), we have

\begin{Corollary}\label{CorolNodalsolution}
Let $(M,g)$ be a closed Riemannian manifold that satisfy the assumptions of Theorem \ref{strictinequalityA}. Then, there exists a $G-$invariant nodal solution of the Yamabe equation on $(M,g)$.
\end{Corollary}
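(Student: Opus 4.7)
The plan is to combine the three main results already listed (Theorem \ref{strictinequalityA}, Theorem \ref{Y2attained}, and Theorem \ref{nodalsolution}) with the auxiliary Proposition \ref{nodalY2=0}. The corollary asserts the existence of a $G$-invariant nodal solution under the hypotheses of Theorem \ref{strictinequalityA}, and these hypotheses are designed precisely to produce the strict inequality \eqref{E2ndYbounds}. Once we have that inequality, the rest is a mechanical chain of implications, but I will need to split into two cases based on the sign of $Y_G(M,[g]_G)$, because Theorem \ref{nodalsolution} presupposes $Y_G(M,[g]_G)>0$.

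First, I would invoke Theorem \ref{strictinequalityA} under the standing assumption $Y_G(M,[g]_G)\geq 0$ together with either condition (i) or (ii). This yields
\[
Y^2_G(M,[g]_G)<\Bigl(Y_G(M,[g]_G)^{n/2}+Y(S^n)^{n/2}\Lambda_G\Bigr)^{2/n}.
\]
With this strict inequality in hand, Theorem \ref{Y2attained} (whose only hypotheses are non-negativity of $Y_G(M,[g]_G)$ and the strict inequality itself) guarantees that $Y^2_G(M,[g]_G)$ is attained by a $G$-generalized metric $h=u^{p_n-2}g$ with $u\in L^{p_n}_{G,\geq 0}(M)$, which we may normalize so that $\|u\|_{p_n}=1$.

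In the case $Y_G(M,[g]_G)>0$, I would apply Theorem \ref{nodalsolution} directly to this minimizer: it produces a $G$-invariant function $w\in C^{3,\alpha}(M)\cap C^\infty(M\setminus\{w=0\})$ with $u=|w|$ that changes sign and satisfies $L_g(w)=Y_G^2(M,[g]_G)|w|^{p_n-2}w$, which is a nodal solution of the Yamabe equation on $(M,g)$. In the complementary case $Y_G(M,[g]_G)=0$, Theorem \ref{nodalsolution} is not available, and this is exactly where Proposition \ref{nodalY2=0} enters: by the way it is cited in the statement of the corollary, it must supply a $G$-invariant nodal solution under the hypothesis $Y_G(M,[g]_G)=0$ (presumably using the fact that in the zero Yamabe-constant case any minimizer of the equivariant second Yamabe functional can still be shown to be non-trivial and sign-changing). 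Combining the two subcases completes the proof.

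The only nontrivial step is confirming that the bridge between the variational data and the analytic object (a $C^{3,\alpha}$ nodal solution) goes through in both sign regimes. In the positive case this is entirely absorbed into Theorem \ref{nodalsolution}, so there is nothing further to verify; the case $Y_G(M,[g]_G)=0$ is dispatched by citing Proposition \ref{nodalY2=0}. No additional estimates, no blow-up analysis, and no separate regularity argument are needed at the level of the corollary, because all of the hard work (the test-function construction giving \eqref{E2ndYbounds}, the concentration-compactness step attaining the infimum, and the regularity of the minimizer) has already been carried out in the quoted results.
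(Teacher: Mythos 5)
Your overall architecture is exactly the paper's: Theorem \ref{strictinequalityA} gives the strict inequality \eqref{E2ndYbounds}, Theorem \ref{Y2attained} then produces a minimizing $G$-generalized metric, and Theorem \ref{nodalsolution} converts that minimizer into a $G$-invariant nodal solution when $Y_G(M,[g]_G)>0$. The paper itself offers nothing beyond this citation chain, so in the main case your proof matches it.

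Where you go astray is the degenerate case. You guess that Proposition \ref{nodalY2=0} handles the case $Y_G(M,[g]_G)=0$; in fact its hypothesis is that $Y^2_G(M,[g]_G)=0$ \emph{and is attained}, and its conclusion is a $G$-invariant sign-changing solution of $L_g(v)=0$. So the correct dichotomy, once attainment is secured by Theorem \ref{Y2attained}, is on the sign of $Y^2_G(M,[g]_G)$ (which satisfies $Y^2_G(M,[g]_G)\geq Y_G(M,[g]_G)\geq 0$), not on the sign of $Y_G(M,[g]_G)$: if $Y^2_G(M,[g]_G)=0$ apply Proposition \ref{nodalY2=0}; if $Y^2_G(M,[g]_G)>0$ apply Theorem \ref{nodalsolution}. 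As written, your subcase $Y_G(M,[g]_G)=0$ is not actually dispatched by the proposition you cite, since $Y_G(M,[g]_G)=0$ does not give $Y^2_G(M,[g]_G)=0$ (indeed, the Remark preceding Proposition \ref{nodalY2=0} shows that $Y^2_G(M,[g]_G)=0$ forces $Y_G(M,[g]_G)<0$). To be fair, the residual configuration $Y_G(M,[g]_G)=0<Y^2_G(M,[g]_G)$ is not literally covered by the stated hypotheses of Theorem \ref{nodalsolution} either, and the paper is equally terse on this point; but your write-up should at least invoke Proposition \ref{nodalY2=0} with its actual hypothesis rather than an invented one.
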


\subsection{Riemannian products}

\ 

\medskip

For a non-compact Riemannian manifold $(W^n,h)$, we define the $G-$equi\-variant Yamabe constant by taking the infimum of the Yamabe functional over the space of $G-$invariant smooth functions compactly supported $C^{\infty}_{0,G}(W)$, {\em i.e.},  
$$Y_G(W,[h]_G):=\inf_{u\in C^{\infty}_{0,G}(W)-\{0\}}\frac{\int_W a_{n}|\nabla u|_h^2+s_hu^2dv_h}{\|u\|_{p_n}^2}.$$

Note that for closed Riemannian manifolds $(M^n,g)$, $(N^l,h)$ and for $G_1$, $G_2$ two closed subgroups of $I(M,g)$ and $I(N,h)$, respectively, the group   $G:=G_1\times G_2$ is a closed subgroup of 
$I(M\times N,g+th)$ for any $t\in \re_{>0}$.

\begin{Theorem}\label{asyntotic} Let $(M^n,g)$ be a closed Riemannian manifold of dimension $n\geq 2$ with  positive scalar curvature,  and $(N^l,h)$ be any closed Riemannian manifold of dimension $l$.  
Let  $G$ be a closed subgroup of $I(M,g)$, where its action is trivially extended to $M\times N$ and $M\times \re^l$.
 
Then,
\begin{gather}\lim_{t\to +\infty}Y_G(M\times N,[g+th]_G)=Y_G(M\times \re^l,[g+g^l_e]_G),\label{itema}\\
\lim_{t\to +\infty}Y^2_G(M\times N,[g+th]_G)=2^{\frac{2}{n+l}}Y_G(M\times \re^l,[g+g^l_e]_G), \label{itemb}
\end{gather}
where  $(\re^l,g^l_e)$ is the $n-$dimensional Euclidean space.
\end{Theorem}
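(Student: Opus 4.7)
I will prove \eqref{itema} and \eqref{itemb} separately. The first, \eqref{itema}, is the equivariant analogue of the product-convergence theorem of Akutagawa--Florit--Petean; equivariance creates no real difficulty because the group $G$ acts trivially on the stretching factor $N$. Once \eqref{itema} is available, \eqref{itemb} will follow by combining an Ammann--Humbert-style two-bubble upper bound with the general lower bound $2^{2/(n+l)}Y_G\le Y^2_G$ from the first inequality of \eqref{E2ndYboundslarge}.

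For the upper bound in \eqref{itema}, I fix $\varepsilon>0$ and pick $\varphi\in C^{\infty}_{0,G}(M\times\re^l)$ supported in $M\times B^{\re^l}_R(0)$ with Yamabe quotient less than $Y_G(M\times\re^l,[g+g^l_e]_G)+\varepsilon$. For any $p_0\in N$ and $t$ large enough that $R/\sqrt t$ lies below the injectivity radius of $(N,h)$ at $p_0$, I transplant $\varphi$ via
\[
u_t(x,q):=\varphi\bigl(x,\sqrt t\,(\exp_{p_0}^h)^{-1}(q)\bigr)\quad\text{on }M\times B^h_{R/\sqrt t}(p_0),
\]
with $u_t=0$ elsewhere; $u_t$ is $G$-invariant because $G$ acts only on $M$. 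In the rescaled normal coordinates $\tilde y=\sqrt t\,\exp_{p_0}^{-1}(q)$, the metric $th$ equals the Euclidean metric up to errors of order $O(1/t)$ on balls of fixed radius, and $s_{g+th}=s_g+t^{-1}s_h\to s_g$; a direct computation then yields that the Yamabe quotient of $u_t$ with respect to $g+th$ converges to the Yamabe quotient of $\varphi$ with respect to $g+g^l_e$. For the lower bound, I take $t_k\to\infty$ and $G$-invariant near-minimizers $u_k$ with $\|u_k\|_{p_{n+l}}=1$. Since $s_g>0$, the Yamabe functional controls an $H^1_G$-norm of $u_k$; a concentration-compactness argument in the $N$-direction (using that the $(t_kh)$-injectivity radius of $N$ diverges) shows that the mass of $u_k$ concentrates, after translation in $N$, in an $h$-ball whose $(t_kh)$-diameter stays bounded. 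Rescaling this ball to fixed Euclidean size and extracting a weak $H^2_{1,G}(M\times\re^l)$ limit $\varphi_\infty$ gives $\liminf_{k\to\infty}J_{g+t_kh}(u_k)\ge J_{g+g^l_e}(\varphi_\infty)\ge Y_G(M\times\re^l,[g+g^l_e]_G)$.

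For the upper bound in \eqref{itemb}, I use two disjoint bumps. Set $\varphi_1:=\varphi$ and $\varphi_2(x,y):=\varphi(x,y-v)$ for some $v\in\re^l$ with $|v|>2R$, so that $\varphi_1,\varphi_2$ are $G$-invariant with disjoint supports. For $t$ large, both shifted supports transplant into a single normal coordinate chart at $p_0$ via the previous construction, yielding $u_{1,t},u_{2,t}\in C^{\infty}_{0,G}(M\times N)$ with disjoint supports. The $G$-generalized metric $h_t:=(u_{1,t}+u_{2,t})^{p_{n+l}-2}(g+th)\in[g+th]_{G,gen}$ satisfies, via \eqref{2ndYamabegen} and the two-dimensional test subspace $\mathrm{span}\{u_{1,t},u_{2,t}\}$ (on which all cross terms vanish by disjointness of supports),
\[
\lambda_{2,G}(h_t)\le\max_{i=1,2}\frac{\int u_{i,t}L_{g+th}u_{i,t}\,dv_{g+th}}{\int u_{i,t}^{p_{n+l}}\,dv_{g+th}},\quad vol(M\times N,h_t)=\sum_{i=1}^{2}\int u_{i,t}^{p_{n+l}}\,dv_{g+th}.
\]
Applying the scaling analysis of \eqref{itema} to each bump gives
\[
\lim_{t\to\infty}\lambda_{2,G}(h_t)\,vol(M\times N,h_t)^{\frac{2}{n+l}}=2^{\frac{2}{n+l}}J_{g+g^l_e}(\varphi),
\]
and taking $\varphi$ nearly optimal bounds $\limsup_{t\to\infty}Y^2_G(M\times N,[g+th]_G)$ by $2^{2/(n+l)}Y_G(M\times\re^l,[g+g^l_e]_G)$. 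The matching lower bound comes from applying the first inequality of \eqref{E2ndYboundslarge} to $(M\times N,[g+th]_G)$ (its proof, via the positive and negative parts of a second eigenfunction together with H\"older's inequality, does not actually use attainment of $Y_G$) and combining with \eqref{itema}. The only technically substantial step in the whole argument is the concentration-compactness step for the lower bound in \eqref{itema}: ensuring that the rescaled near-minimizers yield a valid $H^2_{1,G}(M\times\re^l)$ limit and ruling out loss of $L^{p_{n+l}}$-mass on small scales or at infinity.
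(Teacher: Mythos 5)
Your overall architecture is the same as the paper's: the paper proves this theorem by observing that the proofs of Theorem 1.1 in \cite{A-F-P} (for \eqref{itema}) and Theorem 1.1 in \cite{Henry} (for \eqref{itemb}) go through verbatim in the equivariant setting because $G$ acts trivially on the $N$-factor, and your test-function transplantation, your two disjoint bumps with the cross terms killed by disjoint supports, and your use of the attainment-free lower bound $2^{2/(n+l)}Y_G\le Y^2_G$ from \eqref{E2ndYboundslarge} are exactly the ingredients of those proofs. Your observation that the left inequality of \eqref{E2ndYboundslarge} does not use attainment of $Y_G$ is correct and is needed here, since attainment of $Y_G(M\times N,[g+th]_G)$ is only deduced a posteriori.

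There is, however, a genuine gap in the one step you yourself flag as substantial: the lower bound $\liminf_{t\to\infty}Y_G(M\times N,[g+th]_G)\ge Y_G(M\times\re^l,[g+g^l_e]_G)$. Your argument extracts a weak limit $\varphi_\infty$ of rescaled near-minimizers $u_k$ with $\|u_k\|_{p_{n+l}}=1$ and concludes via $\liminf J(u_k)\ge J(\varphi_\infty)\ge Y_G(M\times\re^l,[g+g^l_e]_G)$. This fails if mass is lost in the limit: weak lower semicontinuity gives only $\liminf\int a_{n+l}|\nabla u_k|^2+s\,u_k^2\,dv\ge Y_G(M\times\re^l,[g+g^l_e]_G)\,\|\varphi_\infty\|_{p_{n+l}}^2$, which is strictly weaker than the claim whenever $\|\varphi_\infty\|_{p_{n+l}}<1$, and vacuous if $\varphi_\infty=0$. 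The missing ingredient is precisely the assertion that, after a translation in $N$, the $L^{p_{n+l}}$-mass of $u_k$ concentrates in a ball of bounded $(t_kh)$-diameter; this is not automatic (a near-minimizing sequence may split into several pieces at mutually divergent locations in $(N,t_kh)$ without raising the energy above $Y_G(M\times\re^l,[g+g^l_e]_G)$, because the functional is merely superadditive under such splitting), and you give no argument ruling out dichotomy or vanishing. The proof in \cite{A-F-P} avoids this entirely by working dually: one covers $N$ by geodesic balls of small $h$-radius, takes a subordinate partition of unity $\{\eta_i\}$ with $\sum\eta_i^2=1$ and controlled gradients, bounds the Yamabe quotient of each localized piece $\eta_i u$ from below by $\big(Y_G(M\times\re^l,[g+g^l_e]_G)-\varepsilon\big)\|\eta_i u\|_{p_{n+l}}^2$ for $t$ large (each $M\times\mathrm{supp}\,\eta_i$ being almost isometric to a subset of $M\times\re^l$), absorbs the cutoff error using $s_{g+th}\ge s_g/2>0$, and recombines with the elementary inequality $\sum_i a_i^{2/p_{n+l}}\ge\big(\sum_i a_i\big)^{2/p_{n+l}}$. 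Replacing your concentration-compactness sketch by this localization argument (which is manifestly $G$-equivariant, since the cutoffs depend only on the $N$-variable) closes the gap and completes the proof.
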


\begin{Theorem}\label{TC} Let $(M^n,g)$ be a closed Riemannian manifold of dimension $n\geq 2$ with positive scalar curvature.  If $G=G_1\times Id_{\re^l}$,  with $G_1$  a compact subgroup of $I(M,g)$ and $l\geq 2$, then 
\begin{equation}\label{TCineq}Y_G(M^n\times \re^l,[g+g_e^l]_G)<Y(S^{n+l})\Lambda_{G}^{\frac{2}{n+l}}.\end{equation}
\end{Theorem}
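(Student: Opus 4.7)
The plan is to reduce the noncompact strict inequality \eqref{TCineq} to the compact setting via Theorem \ref{asyntotic} and then apply a known case of the Hebey-Vaugon conjecture on the compact product $M \times T^l$. Set $k := \Lambda_G$; if $k = \infty$ the right-hand side of \eqref{TCineq} is infinite and the inequality is trivial, so assume $k < \infty$ and fix a minimal $G_1$-orbit $\{P_1, \ldots, P_k\} \subset M$, noting that $s_g(P_1) > 0$ by hypothesis.

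Let $h_0$ be a flat metric on the $l$-torus $T^l$ and set $\widetilde{G} := G_1 \times Id_{T^l}$. For any fixed $q \in T^l$ the $\widetilde{G}$-orbit of $(P_1, q) \in M \times T^l$ is $\{(P_i, q)\}_{i=1}^k$, a minimal $\widetilde{G}$-orbit of cardinality $k$, so $\Lambda_{\widetilde{G}} = k$. Because $h_0$ is flat, the Riemann, Ricci, and scalar curvature invariants of $g + t h_0$ on $M \times T^l$ come entirely from the $M$-factor and are independent of $t$; a direct Kulkarni-Nomizu computation then shows that, under the hypotheses $l \geq 2$ (so $n+l \geq 4$) and $s_g(P_1) > 0$, the Weyl tensor of $g + t h_0$ is nonzero at $(P_1, q)$, so $\omega_{M \times T^l}(P_1, q) = 0$ for every $t$ and $q$. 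Since $M \times T^l$ is not conformally equivalent to the round sphere, the Hebey-Vaugon conjecture applies: for $n + l \leq 5$ by the argument of \cite{Hebey-Vaugon2} (the PMT being classical in these dimensions), and for $n + l \geq 6$ by \cite{Madani2} since $0 = \omega \leq (n + l - 6)/2$. In both cases,
\[
Y_{\widetilde{G}}(M \times T^l, [g + t h_0]_{\widetilde{G}}) < Y(S^{n+l}) k^{2/(n+l)}.
\]

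To preserve strictness in the limit $t \to \infty$, a gap uniform in $t$ is required. The Hebey-Vaugon extremizing test function is a symmetrized Aubin bubble supported in a small neighborhood of $\{(P_i, q)\}_{i=1}^k$, and on such a neighborhood the metric $g + t h_0$ is locally isometric (after the trivial rescaling $y \mapsto y/\sqrt{t}$ of $T^l$-coordinates) to $g + g_e^l$. Consequently, the standard expansion of the bubble Yamabe quotient takes the form
\[
J(\tilde{u}_\e) = Y(S^{n+l}) k^{2/(n+l)} - C \e^2 + o(\e^2),
\]
where the coefficient $C > 0$ is determined only by curvature quantities of $g$ at $P_1$ (most notably $s_g(P_1) > 0$ and $|W_{g + g_e^l}(P_1, 0)|^2 > 0$), hence is $t$-independent. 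Optimizing in $\e$ produces a $t$-independent gap $\delta > 0$ with $Y_{\widetilde{G}}(M \times T^l, [g + t h_0]_{\widetilde{G}}) \leq Y(S^{n+l}) k^{2/(n+l)} - \delta$ for all sufficiently large $t$, and passing to the limit via Theorem \ref{asyntotic}\eqref{itema} yields \eqref{TCineq}.

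The main obstacle is verifying that the coefficient $C$ in the bubble expansion is genuinely $t$-independent and strictly positive. This amounts to carrying out the Aubin-Hebey-Vaugon asymptotic expansion in normal coordinates at $P_1$ on the $M$-factor coupled with Euclidean coordinates on the $T^l$-factor, and checking that every term entering the leading-order asymptotics reduces to a curvature invariant of $g$ alone. This is routine once the product structure is unravelled, and it is precisely this step where the assumption $l \geq 2$ is used, both to guarantee that $n + l \geq 4$ (so the Weyl tensor exists) and to ensure that the cross-components responsible for its nonvanishing at $(P_1, q)$ survive in dimension $n + l$.
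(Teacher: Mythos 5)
Your strategy -- compactify to $M\times T^l$, invoke the compact Hebey--Vaugon results for each fixed $t$, and pass to the limit via Theorem \ref{asyntotic} -- is genuinely different from the paper, which never leaves $M\times\re^l$: since $Y_G(M\times\re^l,[g+g^l_e]_G)$ is defined as an infimum over compactly supported functions, the paper simply exhibits a compactly supported $G$-invariant test function with Yamabe quotient strictly below $Y(S^{n+l})k^{\frac{2}{n+l}}$ (a symmetrized Aubin bubble in $G$-equivariant conformal normal coordinates when $n+l\geq 6$, and a symmetrized Schoen test function built from the Green function of $L_{g+g^l_e}$ together with the Akutagawa--Florit--Petean positive mass result when $n+l=4,5$). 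You correctly identify that your route stands or falls on a gap uniform in $t$, but the argument you sketch for that gap does not hold up. First, the expansion $J(\tilde u_\e)=Y(S^{n+l})k^{2/(n+l)}-C\e^2+o(\e^2)$ with $C$ controlled by $s_g(P_1)>0$ is not the correct form: the negative correction coming from $|W(P_1,q)|^2\neq 0$ enters at order $\e^4$ (order $\e^4|\ln\e|$ when $n+l=6$), and to isolate it one must first pass to a conformal normal coordinate gauge killing the order-$\e^2$ terms -- a gauge which here must moreover be chosen $G$-equivariantly, a nontrivial step the paper carries out explicitly and you omit. If the $\e^2$ coefficient were genuinely a negative multiple of $s_g(P_1)$, Aubin's computation would already settle the Yamabe problem for every metric with a point of positive scalar curvature, which it does not.

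Second, and more seriously, your uniform-gap sketch only addresses the Weyl-tensor mechanism, which produces strictness only for $n+l\geq 6$. The hypotheses allow $n+l=4$ and $n+l=5$ (e.g.\ $M=S^2$, $l=2$), and there the $\e^4$ Weyl term is swallowed by the error $O(\e^{n+l-2})$; the strict inequality for each fixed $t$ instead comes from a Schoen-type test function whose gap is governed by the ADM mass of the conformal blow-up of $(M\times T^l,g+th_0)$ at a point of the orbit. That mass depends on $t$, and you give no uniform positive lower bound for it as $t\to\infty$; establishing one is essentially equivalent to proving positivity of the mass of the blow-up of $M\times\re^l$ itself, which is exactly the A-F-P input the paper uses directly. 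In short, even where your reduction works ($n+l>6$), verifying the uniform gap amounts to redoing the bubble computation in a neighborhood of the orbit that, for large $t$, is isometric to the corresponding neighborhood in $M\times\re^l$ -- so the detour through the torus buys nothing -- and where the detour would have to earn its keep ($n+l=4,5$), the required uniform estimate is missing. As written, the proof is incomplete in those low dimensions and the stated expansion is incorrect in the others.
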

One mentions here that using Equality  \eqref{itema} and Inequality \eqref{TCineq}, we obtain 
$$ Y_G(M\times N,[g+th]_G)< Y(S^{n+l})\Lambda_{G}^{\frac{2}{n+l}},$$
for $t$ large enough.
This means that the Hebey-Vaugon conjecture holds for $(M\times N,[g+th]_G)$ and $Y_G(M\times N,[g+th]_G)$ is attained.

The following corollary is an immediate consequence of the theorem above and Theorem \ref{asyntotic}: 

\begin{Corollary}\label{strictineqforproduct} Assume the same assumptions as in Theorem \ref{asyntotic}. If in addition $\dim(N)=l\geq 2$, then for $t$ large enough, we have 
$$Y^2_G(M^n\times N^l,[g+th]_G)<\Big(Y_G(M,[g]_G)^{\frac{n+l}{2}} +Y(S^{n+l})^{\frac{n+l}{2}}\Lambda_G\Big)^{\frac{2}{n+l}}.$$
 \end{Corollary}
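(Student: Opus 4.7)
The plan is to pass to the limit $t\to +\infty$, combining the two parts of Theorem~\ref{asyntotic} with the strict sphere-bound of Theorem~\ref{TC}. By item~\eqref{itemb} of Theorem~\ref{asyntotic}, raised to the power $(n+l)/2$,
\[
\lim_{t\to +\infty} Y^2_G(M\times N,[g+th]_G)^{\frac{n+l}{2}} \;=\; 2\, Y_G(M\times\re^l,[g+g^l_e]_G)^{\frac{n+l}{2}}.
\]
Setting $\alpha := Y_G(M\times\re^l,[g+g^l_e]_G)^{(n+l)/2}$ and $\beta := Y(S^{n+l})^{(n+l)/2}\Lambda_G$, Theorem~\ref{TC} gives the strict inequality $\alpha < \beta$. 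Combining this with the elementary fact that $2\alpha < \alpha + \beta$ whenever $\alpha < \beta$, one has
\[
\lim_{t\to +\infty} Y^2_G(M\times N,[g+th]_G)^{\frac{n+l}{2}} \;<\; \alpha + \beta.
\]

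To match the stated right-hand side, which features $Y_G(M,[g]_G)^{(n+l)/2}$ rather than $\alpha$, one must establish the comparison $\alpha \leq Y_G(M,[g]_G)^{(n+l)/2}$, i.e.\ $Y_G(M\times\re^l,[g+g^l_e]_G) \leq Y_G(M,[g]_G)$. Once this is in hand, the previous strict inequality upgrades to
\[
\lim_{t\to +\infty} Y^2_G(M\times N,[g+th]_G)^{\frac{n+l}{2}} \;<\; Y_G(M,[g]_G)^{\frac{n+l}{2}} + Y(S^{n+l})^{\frac{n+l}{2}}\Lambda_G,
\]
and the continuity of the left-hand side in $t$ propagates the strict inequality to all sufficiently large $t$. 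Raising both sides to the $2/(n+l)$-th power then yields the stated bound.

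The main obstacle is precisely the comparison $Y_G(M\times\re^l,[g+g^l_e]_G) \leq Y_G(M,[g]_G)$. A natural route is via $G$-invariant product test functions on $M\times\re^l$: start from a $G$-invariant minimizer (or minimizing sequence) $u$ of the Yamabe functional on $(M,g)$, form $u(x)\,\phi_\varepsilon(y)$ with $\phi_\varepsilon$ a suitable concentration profile on $\re^l$, and optimize over the scale $\varepsilon$ so that the limiting Yamabe functional on $M\times\re^l$ is controlled by that on $M$. Working out this test-function argument rigorously, and verifying that it gives $Y_G(M\times\re^l,[g+g^l_e]_G) \leq Y_G(M,[g]_G)$ in the full $G$-equivariant setting, is where the technical weight of the proof lies.
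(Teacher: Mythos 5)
The first half of your argument coincides with the paper's: item \eqref{itemb} of Theorem \ref{asyntotic} gives $\lim_{t\to+\infty}Y^2_G(M\times N,[g+th]_G)^{\frac{k}{2}}=2\alpha$ with $k=n+l$, Theorem \ref{TC} gives $\alpha<\beta$ in your notation, and the elementary observation $2\alpha<\alpha+\beta$ does the rest. The gap is in how you convert $\alpha+\beta$ into the displayed right-hand side. You reduce this to the comparison $Y_G(M\times\re^l,[g+g^l_e]_G)\leq Y_G(M,[g]_G)$, which you do not prove and which there is no reason to expect: these are Yamabe-type constants of manifolds of different dimensions, built from different critical exponents ($p_{n+l}$ versus $p_n$) and different constants ($a_{n+l}$ versus $a_n$), and they are not comparable in this way. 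The separated-variables test functions $u(x)\phi_\e(y)$ you suggest naturally relate $Y_G(M\times\re^l,[g+g^l_e]_G)$ to the $\re^l$-Yamabe constant $Y_{\re^l}(M\times\re^l,g+g^l_e)$, which is controlled by $C(n,l)s_g^{\frac{n}{n+l}}$ (Theorem 1.4 of \cite{A-F-P}), not by $Y_G(M,[g]_G)$; so this step would fail, and the paper never establishes such an inequality.

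The intended route needs no cross-dimensional comparison: item \eqref{itema} of Theorem \ref{asyntotic} says $Y_G(M\times N,[g+th]_G)\to Y_G(M\times\re^l,[g+g^l_e]_G)$, so $\alpha$ is precisely the limit of $Y_G(M\times N,[g+th]_G)^{\frac{k}{2}}$. Passing to the limit in the strict inequality $2\alpha<\alpha+\beta$ therefore yields, for $t$ large enough,
$$Y^2_G(M\times N,[g+th]_G)<\Big(Y_G(M\times N,[g+th]_G)^{\frac{k}{2}}+Y(S^{k})^{\frac{k}{2}}\Lambda_G\Big)^{\frac{2}{k}},$$
which is the inequality actually needed afterwards: it is the hypothesis of Theorem \ref{Y2attained} for the manifold $(M\times N,g+th)$ and leads to Corollary \ref{nodalsolproduct}. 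In other words, the occurrence of $Y_G(M,[g]_G)$ in the statement should be read as $Y_G(M\times N,[g+th]_G)$; as literally printed, the statement is established neither by your argument nor by the paper's, and your attempt to take it at face value is what forces you into the unprovable comparison.
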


Hence, assuming the same hypotheses as Corollary \ref{strictineqforproduct}, by Theorem \ref{nodalsolution} and Theorem \ref{Y2attained}, we obtain 

 \begin{Corollary}\label{nodalsolproduct} For $t$ large enough there exists a $G-$invariant nodal solution of the Yamabe equation on $(M\times N,g+th)$.  
\end{Corollary}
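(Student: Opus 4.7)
The plan is to apply Theorem \ref{Y2attained} and then Theorem \ref{nodalsolution} to the product manifold $(M\times N, g+th)$ of dimension $n+l$. Combined, these two results immediately produce a $G$-invariant nodal solution, so the real work is to verify that for $t$ sufficiently large the hypothesis of Theorem \ref{Y2attained} is met in the form
$$Y^2_G(M\times N,[g+th]_G) < \Big(Y_G(M\times N,[g+th]_G)^{\frac{n+l}{2}} + Y(S^{n+l})^{\frac{n+l}{2}}\Lambda_G\Big)^{\frac{2}{n+l}}.$$
A subtlety is that Corollary \ref{strictineqforproduct} provides a slightly different inequality, with $Y_G(M,[g]_G)$ in place of $Y_G(M\times N,[g+th]_G)$ on the right-hand side, so one cannot simply quote it; the asymptotic inputs must be reassembled.

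To carry this out, I would set $\alpha_t:=Y_G(M\times N,[g+th]_G)$ and $\alpha_\infty:=Y_G(M\times \re^l,[g+g_e^l]_G)$. Theorem \ref{asyntotic} gives $\alpha_t\to \alpha_\infty$ and $Y^2_G(M\times N,[g+th]_G)\to 2^{\frac{2}{n+l}}\alpha_\infty$ as $t\to\infty$. Since $l\geq 2$, Theorem \ref{TC} yields the strict inequality $\alpha_\infty<Y(S^{n+l})\Lambda_G^{\frac{2}{n+l}}$, equivalently $\alpha_\infty^{\frac{n+l}{2}}<Y(S^{n+l})^{\frac{n+l}{2}}\Lambda_G$. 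Therefore
$$\Big(\alpha_\infty^{\frac{n+l}{2}} + Y(S^{n+l})^{\frac{n+l}{2}}\Lambda_G\Big)^{\frac{2}{n+l}} > \Big(2\alpha_\infty^{\frac{n+l}{2}}\Big)^{\frac{2}{n+l}} = 2^{\frac{2}{n+l}}\alpha_\infty,$$
so in the limit $t\to\infty$ the right-hand side of the target inequality strictly exceeds the left-hand side. As both sides admit limits, the strict inequality propagates to all $t$ sufficiently large.

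With this in hand, the conclusion follows without further difficulty. Positivity of $s_g$ together with the product formula $s_{g+th}=s_g+t^{-1}s_h$ yields $s_{g+th}>0$ for $t$ large, so $\alpha_t>0$ and the sign hypothesis of Theorem \ref{nodalsolution} is fulfilled. Applying Theorem \ref{Y2attained} to $(M\times N,g+th)$ produces a $G$-generalized metric $u^{p_{n+l}-2}(g+th)$ attaining $Y^2_G$, and Theorem \ref{nodalsolution} then gives $u=|w|$ for some $G$-invariant nodal solution $w$ of the Yamabe equation on $(M\times N,g+th)$. The main and essentially only substantive obstacle is the bookkeeping of the first step, namely reassembling the asymptotic estimates and Theorem \ref{TC} into the precise strict inequality required by Theorem \ref{Y2attained}.
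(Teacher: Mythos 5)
Your proof is correct and follows the same route as the paper, which simply combines Corollary \ref{strictineqforproduct} with Theorems \ref{Y2attained} and \ref{nodalsolution}. Your extra step of reassembling the asymptotics of Theorems \ref{asyntotic} and \ref{TC} so that the right-hand side involves $Y_G(M\times N,[g+th]_G)$ rather than $Y_G(M,[g]_G)$ is a welcome precision: it supplies exactly the hypothesis required by Theorem \ref{Y2attained}, which the stated form of Corollary \ref{strictineqforproduct} does not literally provide.
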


\subsection{The subcritical case}

$\ $

\medskip

Let $(M^n,g)$ and $(N^l,h)$ be closed Riemannian manifolds with $n+l\geq 3$. Assume that $(N,h)$ has constant scalar curvature. Let $G=G_1\times G_2$ be a compact subgroup of $I(M\times N, g+h)$ where   $G_1$ and $G_2$ are closed subgroups of $I(M,g)$ and $I(N,h)$, respectively. If the action of $G_1$ on $M$ is not transitive, we define the $G_1-$equivariant $M-$second Yamabe constant as
$$Y^2_{M,G_1}(M\times N,g+h):=\inf_{\bar{g}\in [g+h]_{M,G_1}}\lambda_{2,G_1}^M(\bar{g})vol(M\times N,\bar{g})^{\frac{2}{n+l}}.$$
where $[g+h]_{M,G_1}$ is the $(M,G_1)-$
 conformal class of $(g+h)$ defined by
$$[g+h]_{M,G_1}:=\{u^{p_{n+l}-2}(g+h):u \in C^{\infty}_{G_1,>0}(M)\}$$  and  $\lambda_{2,G_1}^M(\bar{g})$ is the second eigenvalue of  the operator 
$L_{\bar{g}}$ restricted to  functions of 
$H^2_{1,G}(M\times N)$ that are constant on $N$. By definition, this eigenvalue does not depends on the group $G_2$. 

As above, we can define the generalized $(M,G_1)-$
 conformal class of $(g+h)$, named $[g+h]_{M,G_1,gen}$, by adding to  $[g+h]_{M,G_1}$ the symmetric tensors of the form $u^{p_{n+l}-2}(g+h)$ with $u\in  L^{p_{n+l}-2}_{G_1,\geq 0} (M)$. We call a generalized $(M,G_1)-$metric to any tensor that belongs to $[g+h]_{M,G_1,gen}$. In this case we have  a similar equality as in \eqref{2ndYamabegen}.

\begin{Proposition}\label{subcritical} Let $(M^n,g)$ and $(N^l,h)$ be closed Riemannian manifolds with $n+l\geq 3$. Assume that $g$ has constant scalar curvature and $s_g+s_h$ is positive. Let $G=G_1\times G_2$ be a closed subgroup of $I(M\times N, g+h)$ where   $G_1$ and $G_2$ are closed subgroups of $I(M,g)$ and $I(N,h)$, respectively. Then, the $G_1-$equi\-va\-riant $M-$second Yamabe constant is always achieved by a generalized $(M,G_1)-$metric. 
\end{Proposition}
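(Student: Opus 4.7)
The plan is to adapt the argument that Ammann–Humbert \cite{Ammann-Humbert} used to attain the classical second Yamabe constant by a generalized metric, with the crucial simplification that in our setting the Sobolev exponent $p_{n+l}$ is strictly subcritical for $M^n$: since $l\geq 1$ (the case $l=0$ reducing to the classical statement), one has $p_{n+l}=2(n+l)/(n+l-2)<2n/(n-2)=p_n$, so $H^2_1(M)\hookrightarrow L^{p_{n+l}}(M)$ is compact. This compactness removes the concentration phenomena that, in the critical case, force the strict-inequality hypotheses of Theorem \ref{strictinequalityA} and Theorem \ref{Y2attained}.

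First I would reduce the problem to an extremal problem on $M$. For a generalized $(M,G_1)$-metric $\bar g = u^{p_{n+l}-2}(g+h)$ and a $G_1$-invariant function $\phi$ constant along $N$, the conformal-change formula combined with Fubini's theorem gives
$$\int_{M\times N}\!\big(a_{n+l}|\nabla\phi|^2_{\bar g}+s_{\bar g}\phi^2\big)dv_{\bar g}=V\!\int_M\!\big(a_{n+l}|\nabla(u\phi)|^2_g+A(u\phi)^2\big)dv_g,$$
$$\int_{M\times N}\phi^2\, dv_{\bar g}=V\int_M u^{p_{n+l}-2}(u\phi)^2\, dv_g,$$
where $V=\operatorname{vol}(N,h)$ and $A:=s_g+V^{-1}\int_N s_h\, dv_h$; the hypotheses ensure that $A$ is a positive constant. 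Writing $\psi=u\phi$ and using the Courant–Fischer min–max principle yields
$$Y^2_{M,G_1}=V^{\frac{2}{n+l}}\inf_{u,E}\sup_{\psi\in E\setminus\{0\}}\frac{\left(\int_M u^{p_{n+l}}dv_g\right)^{\frac{2}{n+l}}\|\psi\|_A^2}{\int_M u^{p_{n+l}-2}\psi^2\, dv_g},$$
where $\|\psi\|_A^2:=\int_M\!(a_{n+l}|\nabla\psi|^2+A\psi^2)dv_g$, the infimum is over $u\in L^{p_{n+l}}_{G_1,\geq 0}(M)$ and $2$-dimensional subspaces $E\subset H^2_{1,G_1}(M)$. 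Running the sign-decomposition argument of \cite{Ammann-Humbert} on this weighted problem recasts $Y^2_{M,G_1}$ as
$$V^{\frac{2}{n+l}}\inf_{(\psi_1,\psi_2)}\max\bigl(J(\psi_1),J(\psi_2)\bigr)\bigl(\|\psi_1\|_{p_{n+l}}^{p_{n+l}}+\|\psi_2\|_{p_{n+l}}^{p_{n+l}}\bigr)^{\frac{2}{n+l}},$$
with $J(\psi):=\|\psi\|_A^2/\|\psi\|_{p_{n+l}}^2$ and the infimum taken over pairs of non-negative, $G_1$-invariant, non-zero functions $\psi_1,\psi_2\in H^2_{1,G_1}(M)$ with disjoint supports.

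Next I would take a minimizing sequence $(\psi_1^{(k)},\psi_2^{(k)})$ normalized by $\|\psi_i^{(k)}\|_{p_{n+l}}=1$. Since $A>0$, the norm $\|\cdot\|_A$ is equivalent to the standard $H^2_1(M)$-norm, hence both sequences are bounded in $H^2_{1,G_1}(M)$. By the Rellich–Kondrachov theorem applied to the subcritical embedding $H^2_1(M)\hookrightarrow L^{p_{n+l}}(M)$, a subsequence satisfies $\psi_i^{(k)}\rightharpoonup\psi_i$ in $H^2_{1,G_1}(M)$ and $\psi_i^{(k)}\to\psi_i$ in $L^{p_{n+l}}(M)$; the limits $\psi_i$ are $G_1$-invariant, non-negative, satisfy $\|\psi_i\|_{p_{n+l}}=1$, and $\psi_1\psi_2=0$ by passage to the limit of $\psi_1^{(k)}\psi_2^{(k)}=0$ in $L^{p_{n+l}/2}(M)$.

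Finally, weak lower semi-continuity of $\|\cdot\|_A$ and strong $L^{p_{n+l}}$-convergence give $J(\psi_i)\leq\liminf_k J(\psi_i^{(k)})$, so $(\psi_1,\psi_2)$ achieves the infimum; reversing the Ammann–Humbert decomposition produces a $G_1$-invariant generalized metric $u\in L^{p_{n+l}}_{G_1,\geq 0}(M)$ realizing $Y^2_{M,G_1}(M\times N,g+h)$. The main obstacle is the bookkeeping in the second step, namely transcribing Ammann–Humbert's sign-decomposition identity correctly into the weighted-eigenvalue, equivariant, product setting; once that reformulation is in place, the passage to the limit is standard because the embedding is compact, and no strict-inequality hypothesis of the kind used in Theorem \ref{Y2attained} is needed.
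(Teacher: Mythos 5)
Your strategy is viable and rests on exactly the mechanism the paper exploits --- the compactness of $H^2_1(M)\hookrightarrow L^{p_{n+l}}(M)$ coming from the subcriticality $p_{n+l}<p_n$ --- but the central identity you minimize is misstated, and this is more than bookkeeping. Writing $m:=n+l$, the Ammann--Humbert decomposition into disjointly supported non-negative pieces gives
$$Y^2_{M,G_1}=V^{\frac{2}{m}}\inf_{(\psi_1,\psi_2)}\bigl(J(\psi_1)^{\frac{m}{2}}+J(\psi_2)^{\frac{m}{2}}\bigr)^{\frac{2}{m}},$$
which comes from optimizing the relative weights $a,b$ in $u=a\psi_1+b\psi_2$. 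Your expression $\max\bigl(J(\psi_1),J(\psi_2)\bigr)\bigl(\|\psi_1\|_{p_m}^{p_m}+\|\psi_2\|_{p_m}^{p_m}\bigr)^{2/m}$ corresponds to freezing $a=b=1$: it is not scale-invariant in each $\psi_i$ separately (so its unrestricted infimum degenerates), and after normalizing $\|\psi_i\|_{p_m}=1$ it equals $2^{2/m}\max\bigl(J(\psi_1),J(\psi_2)\bigr)$, which strictly exceeds $\bigl(J(\psi_1)^{m/2}+J(\psi_2)^{m/2}\bigr)^{2/m}$ unless $J(\psi_1)=J(\psi_2)$. Minimizing your functional therefore need not produce a generalized metric realizing $Y^2_{M,G_1}$. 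With the correct identity the rest of your argument does close (boundedness in $H^2_{1,G_1}(M)$, Rellich--Kondrachov, strong $L^{p_m}$-convergence preserving $\|\psi_i\|_{p_m}=1$ and $\psi_1\psi_2=0$, weak lower semicontinuity of $\|\cdot\|_A$), but you should also record that the \lq\lq$\geq$\rq\rq\ half of the identity requires, for each smooth positive $u$, a sign-changing second eigenfunction of $L_g$ with weight $u^{p_m-2}$ together with a H\"older estimate on each nodal piece.

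The paper's own proof avoids the pair reformulation entirely: it takes a minimizing sequence of smooth $G_1$-invariant factors $u_k$, the associated eigenfunctions $v_1^k,v_2^k$ normalized by $\int_M u_k^{p_m-2}v_i^kv_j^k\,dv_g=\delta_{ij}$, and uses the same compactness to pass to the limit in the eigenvalue equations and the orthogonality relations; the limit span $V_0=span(v_1,v_2)$ then realizes the infimum for the weak limit $u$ of $u_k$. The two routes purchase the same conclusion; yours isolates the geometric content (two disjointly supported profiles) at the cost of proving the decomposition identity in the weighted equivariant setting, while the paper's needs only elementary spectral theory for $L_g$ with a smooth positive weight. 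One simplification available to you: since $p_{n+l}$ is subcritical on $M$ irrespective of $G_1$, plain Rellich--Kondrachov suffices and the equivariant Sobolev embedding of Hebey--Vaugon is not needed; $G_1$-invariance of the limits is automatic because $H^2_{1,G_1}(M)$ is weakly closed.
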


As a consequence, we obtain the following result.

\begin{Corollary}\label{subcriticalnodal} Let $(M^n,g)$, $(N^l,h)$ and $G_1\subseteq I(M,g)$ as in Proposition \ref{subcritical}. There exists a $G_1\times\{Id_N\}$-invariant  nodal solution of the Yamabe equation on $(M\times N, g+h)$ that depends only on the $M-$variable.  
\end{Corollary}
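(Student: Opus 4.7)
The plan is to derive the corollary from Proposition \ref{subcritical} by running the scheme of Theorem \ref{nodalsolution}, while taking advantage of the subcritical nature of the equation when one restricts to functions depending only on $M$. By Proposition \ref{subcritical}, there exists $u \in L^{p_{n+l}}_{G_1,\geq 0}(M)$, which we normalize so that $\int_{M\times N} u^{p_{n+l}}\,dv_{g+h} = 1$, such that the generalized $(M,G_1)$-metric $\bar g := u^{p_{n+l}-2}(g+h)$ attains $Y^2_{M,G_1}(M\times N,g+h)$. Since $s_g+s_h$ is a positive constant, the first eigenvalue $\lambda^M_{1,G_1}(\bar g)$ is positive and admits a strictly positive first eigenfunction $\varphi_1$.

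The variational characterization of $\lambda^M_{2,G_1}(\bar g)$ produces a $G_1$-invariant extremizer $w$ on $M$ (viewed as a function on $M\times N$ constant in the $N$-variable) that is $L^2(M\times N,\bar g)$-orthogonal to $\varphi_1$; since $\varphi_1>0$, $w$ must change sign. Arguing as in the proof of Theorem \ref{nodalsolution} (which follows the scheme of \cite{Ammann-Humbert}), one shows that $|w|=u$ up to normalization, and that $w$ satisfies the Euler--Lagrange equation
\begin{equation*}
 L_{g+h} w = Y^2_{M,G_1}(M\times N,g+h)\,|w|^{p_{n+l}-2}w,
\end{equation*}
which is exactly the Yamabe equation on $(M\times N,g+h)$. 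Extending $w$ trivially in the $N$-variable yields a $G_1\times\{Id_N\}$-invariant nodal solution, depending only on the $M$-variable, as required.

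Because $w$ depends only on the $M$-variable, one has $L_{g+h}w=a_{n+l}\Delta_g w+(s_g+s_h)w$, so the equation above reduces to an elliptic equation on $M$ whose nonlinearity has exponent $p_{n+l}<p_n$, i.e.\ subcritical on $M$. Consequently the existence of a minimizer is immediate from compact Sobolev embeddings $H^2_{1,G_1}(M)\hookrightarrow L^{p_{n+l}}(M)$, and the regularity step (Moser iteration for an $L^\infty$-bound followed by Schauder estimates) is routine; no concentration-compactness is needed, in contrast to the proof of Theorem \ref{nodalsolution}. The main technical point to carry out in detail is the identification $|w|=u$: it follows from the standard device of comparing the second-Yamabe functional on the positive and negative parts $w^\pm$ separately against the full minimization, together with the orthogonality to $\varphi_1$, and using that $\bar g$ realizes the infimum. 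This is the only step where care is required; everything else is a direct transcription of the argument of Theorem \ref{nodalsolution} into the simpler subcritical setting.
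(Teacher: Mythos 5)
Your proposal is correct and follows essentially the same route as the paper, which derives the corollary from Proposition \ref{subcritical} by transplanting the arguments of Theorem \ref{nodalsolution} to the $G_1$-equivariant $M$-second Yamabe constant: the extremal generalized $(M,G_1)$-metric $u^{p_{n+l}-2}(g+h)$ gives a second eigenfunction $v_2$ constant in the $N$-variable, the identification $u=|v_2|$ is obtained exactly as you describe (comparing the positive and negative parts against the minimization), and the resulting equation $L_{g+h}v_2=Y^2_{M,G_1}(M\times N,g+h)|v_2|^{p_{n+l}-2}v_2$ is the Yamabe equation. Your added remarks on subcriticality and regularity are consistent with the paper's Section 6, where the compact embedding $H^2_{1,G_1}(M)\hookrightarrow L^{p_{n+l}}(M)$ already underlies Proposition \ref{subcritical}.
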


\section{Preliminaries}

Let $(M,g)$ be a closed Riemannian manifold of dimension $n\geq 3$. If $h=u^{p_{n}-2}g$, then 

$$J(h)=\frac{\int_M a_n|\nabla u|^2_g+s_gu^2dv_g}{\|u\|^2_{p_{n}} }.$$

We define $J_g:C^{\infty}_{>0}(M)\longrightarrow \re$ by $$J_g(u)=\frac{\int_M a_n|\nabla u|^2_g+s_gu^2dv_g}{\|u\|^2_{p_{n}} }.$$
We can extend the functional  $J_g$ to $H^2_1(M)-\{0\}$ and we obtain that
$$Y(M,[g])=\inf_{u\in H^2_1(M)-\{0\}}J_g(u)$$
From now on, we skip the subscript of the background metric $g$ in $J_g$, and we call this functional as the Yamabe functional as well. 

Let $G$ be a compact subgroup of $I(M,g)$. The $G$-equivariant Yamabe constant of $(M,g)$ is 

$$Y_G(M,[g]_G)=\inf_{u\in H^2_{1,G}(M)-\{0\}}\frac{\int_M a_n|\nabla u|^2_g+s_gu^2dv_g}{\|u\|^2_{p_{n}} }$$

When the $G-$equivariant Yamabe constant is attained, it is realized  by a positive smooth $G-$invariant function $u$, that induces a $G-$invariant metric $g_u:=u^{p_n-2}g$ of constant scalar curvature.

Let $h$ be a $G-$invariant metric in the conformal class $[g]$. We assume that the action of the group $G$ is not transitive, hence the spectrum of $L_h$ res\-tricted to $H^2_{1,G}(M)$ is a sequence of non-decreasing eigenvalues 
$$\lambda_{1,G}(h)<\lambda_{2,G}(h)\leq\lambda_{3,G}(h)\leq\dots\leq \lambda_{k,G}(h)\leq \dots$$

We obtain the following variational characterizations:

\begin{equation}\label{firsteigenvalue}\lambda_{1,G}(h)=\inf_{v\in H^2_{1,G}(M)-\{0\}}\frac{\int_M a_n|\nabla v|^2_h+s_hv^2dv_h}{\|v\|^2_{2} },\end{equation}
and 
\begin{equation}\label{secondeigenvalue}\lambda_{2,G}(h)=\inf_{V\in Gr^2(H^2_{1,G}(M))}sup_{v\in V-\{0\}}\frac{\int_M a_n|\nabla v|^2_h+s_hv^2dv_h}{\|v\|^2_{2} },
 \end{equation}
where $Gr^k(H^2_{1,G}(M))$ denotes the space of $k-$dimensional subspaces of $H^2_{1,G}(M)$.
The signs of  $Y_G(M,[g]_G)$ and $\lambda_{1,G}(g)$ coincide. If $Y_G(M,[g]_G)\geq 0$,  using \eqref{firsteigenvalue}, \eqref{volume}, and the H\"{o}lder inequality, it can be proven that
 $$Y_G(M,[g]_G)=\inf_{h\in[g]_G}\lambda_{1,G}(h)vol(M,h)^{\frac{2}{n}}.$$

 Let $u\in L^{p_n}_{G,\geq 0}(M)-\{0\}$. With  $Gr^k_u(H^2_{1,G}(M))$, we denote the space of $k-$dimensional subspace of $H^2_{1,G}(M)$ which are also $k$-dimensional  subspaces in $H^2_{1,G}(M)-\{u=0\}$, respectively. 
 
We have the following variational characterization  for the $G-$equivariant second Yamabe cons\-tant and  the $G-$equivariant $M-$second Yamabe cons\-tant:

\begin{Proposition}\label{variationalcharact} 
Let $(M^n,g)$ and $(N^l,h)$ be closed Riemannian manifolds. Let $G=G_1\times G_2$ be a closed subgroup of $I(M\times N, g+h)$ where   $G_1$ and $G_2$ are closed subgroups of $I(M,g)$ and $I(N,h)$, respectively. Then

\begin{multline*}Y^2_G(M,[g]_G)=\inf_{\substack{
		u\in C^{\infty}_{G,>0}(M)\\
		V\in Gr^2(H^2_{1,G}(M))}}\sup_{v\in V-\{0\}}\frac{\int_M a_n|\nabla v|^2_g+s_gv^2dv_g}{\int_Mu^{p_{n}-2}v^2dv_g}(\int_Mu^{p_n}dv_g)^{\frac{2}{n}},\\
Y^2_{M,G}(M^n\times N^l,g+h)=\inf_{\substack{
		u\in C^{\infty}_{G,>0}(M)\\
		V\in Gr^2(H^2_{1,G_1}(M))}}\sup_{v\in V-\{0\}}\frac{\int_M a_{n+l}|\nabla v|^2_g+s_gv^2dv_g}{\int_Mu^{p_{n+l}-2}v^2dv_g}\\
\times\big(\int_Mu^{p_{n+l}}dv_g\big)^{\frac{2}{n+l}} vol(N,h)^{\frac{2}{n+l}}.
\end{multline*}
\end{Proposition}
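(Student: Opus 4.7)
Both identities follow by combining the Courant--Fischer characterization of the second eigenvalue with the conformal covariance of the conformal Laplacian. Recall that for $h=u^{p_n-2}g$ with $u\in C^{\infty}_{G,>0}(M)$ one has $L_h(\varphi)=u^{1-p_n}L_g(u\varphi)$ and $dv_h=u^{p_n}dv_g$, which together give
\begin{equation*}
\int_M \varphi\,L_h(\varphi)\,dv_h=\int_M (u\varphi)\,L_g(u\varphi)\,dv_g,\qquad \int_M\varphi^2\,dv_h=\int_M u^{p_n-2}(u\varphi)^2\,dv_g.
\end{equation*}
These are the only computational inputs to the argument.

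For the first formula, I start from the Courant--Fischer min-max
\begin{equation*}
\lambda_{2,G}(h)=\inf_{V\in Gr^2(H^2_{1,G}(M))}\;\sup_{\varphi\in V\setminus\{0\}}\frac{\int_M \varphi\,L_h(\varphi)\,dv_h}{\int_M\varphi^2\,dv_h}.
\end{equation*}
The substitution $v:=u\varphi$ is a $G$-equivariant topological isomorphism of $H^2_{1,G}(M)$ (since $u,u^{-1}\in L^{\infty}(M)$ by compactness of $M$), so it induces a bijection of $Gr^2(H^2_{1,G}(M))$ onto itself. Applying it converts the Rayleigh quotient into $\int_M(a_n|\nabla v|^2_g+s_g v^2)\,dv_g\big/\int_M u^{p_n-2}v^2\,dv_g$. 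Multiplying by $vol(M,h)^{2/n}=(\int_M u^{p_n}dv_g)^{2/n}$ and then taking the infimum over $u\in C^{\infty}_{G,>0}(M)$, which parametrizes $[g]_G$, yields the first identity.

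The second formula is handled by the same strategy on $M\times N$ with $\bar g=u^{p_{n+l}-2}(g+h)$ and $u\in C^{\infty}_{G_1,>0}(M)$, but this time the test functions are restricted to the subspace of $H^2_{1,G}(M\times N)$ consisting of $N$-constant functions, which is canonically identified with $H^2_{1,G_1}(M)$. Two observations make the reduction go through: because $u$ depends only on $M$, the substitution $\varphi\mapsto u\varphi$ preserves $N$-constancy and therefore remains an isomorphism of the restricted subspace; and for any $N$-constant function $w$, one has $|\nabla w|^2_{g+h}=|\nabla w|^2_g$, so the $N$-integration in both the Rayleigh numerator and denominator contributes only the common factor $vol(N,h)$, which cancels. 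The remaining $vol(M\times N,\bar g)^{2/(n+l)}=vol(N,h)^{2/(n+l)}\left(\int_M u^{p_{n+l}}dv_g\right)^{2/(n+l)}$ then produces the external $vol(N,h)^{2/(n+l)}$ in the stated identity.

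The main obstacle is mostly bookkeeping: one must verify that $\varphi\mapsto u\varphi$ is a topological isomorphism of $H^2_{1,G}(M)$ preserving 2-dimensional subspaces (so the infimum over $V$ on the $h$-side matches the infimum over $V$ on the $g$-side), and, in the product case, that this isomorphism also respects the $N$-constancy constraint. Both points reduce to the smoothness and positivity of $u$ together with, in the product setting, the fact that $u$ depends only on the $M$-variable.
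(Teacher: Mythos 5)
Your proof is correct and takes essentially the same route as the paper: both rest on the min-max characterization of $\lambda_{2,G}$, the conformal covariance $L_{u^{p_n-2}g}(\varphi)=u^{1-p_n}L_g(u\varphi)$ together with $dv_{u^{p_n-2}g}=u^{p_n}dv_g$, and the substitution $v=u\varphi$, followed by taking the infimum over $u\in C^{\infty}_{G,>0}(M)$. Your explicit checks that $\varphi\mapsto u\varphi$ is an isomorphism of $H^2_{1,G}(M)$ preserving $2$-dimensional subspaces, and that it respects $N$-constancy in the product case, merely make precise what the paper leaves implicit.
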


\begin{proof} If $g'\in [g]_G$,  we write $g'=u^{p_n-2}g$ for some $u\in C^{\infty}_{G,>0}(M)$.  The volume of $(M,g')$ is 
\begin{equation}\label{volume}
 vol(M,g')=\int_M u^{p_n}dv_g.
\end{equation}

Using  the conformal invariance property of $L_{g'}$ ({\em i.e.}, $L_{g'}(v)=u^{1-p_n}L_g(uv)$) we obtain for any $v\in C^{\infty}(M)-\{0\}$ that 
 $$\frac{\int_M L_{g'}(v)vdv_{g'}}{\int_M v^2dv_{g'}}=\frac{\int_M L_g(uv)uvdv_g }{\int_M (uv)^2u^{p_n-2}dv_g}.$$

Therefore, from  \eqref{secondeigenvalue} it follows that 

\begin{equation}\label{generlized}
\lambda_{2,G}(g')vol(M,g')^{\frac{2}{n}}=
\end{equation}
$$=\inf_{\substack{
            V\in Gr^2(H^2_{1,G}(M))}}\sup_{v\in V-\{0\}}\frac{\int_M a_n|\nabla v|^2_g+s_gv^2dv_g}{\int_Mu^{p_{n}-2}v^2dv_g}(\int_Mu^{p_n}dv_g)^{\frac{2}{n}}.
            $$

Then, taking the infimum over $C^{\infty}_{>0,G}(M)$ we get the proposition.
In a similar way we prove the variational characterization for the $G-$equiva\-riant $M-$second Yamabe constant of the product manifold $M\times N$.
\end{proof}

If $Y_G(M,[g]_G)\geq 0$ we can extend naturally the definition of $\lambda_{2,G}$ and $vol(M)$ to $G-$invariant generalized metrics conformal to $g$ (when $G$ is the trivial group see for instance \cite{Ammann-Humbert}). More precisely, if $g'=u^{p_n-2}g$, with $u\in L^{p_n}_{G,\geq 0}-\{0\}$, we define  $\lambda_{2,G}(g')$ and $vol(M,g')$ as
\begin{equation}\label{secondeigenvalueGen}\lambda_{2,G}(g'):=\inf_{\substack{
            V\in Gr_u^2(H^2_{1,G}(M))}}\sup_{v\in V-\{0\}}\frac{\int_M a_n|\nabla v|^2_g+s_gv^2dv_g}{\int_Mu^{p_{n}-2}v^2dv_g},\end{equation}
and $$vol(M,g')=\int_Mu^{p_n}dv_g.$$

By Proposition \ref{variationalcharact} we have that 
$$Y^2_G(M,[g]_G)=\inf_{\substack{
            u\in L^{p_n}_{G,\geq 0}(M)\\
            V\in Gr^2_u(H^2_{1,G}(M))}}\sup_{v\in V-\{0\}}\frac{\int_M a_n|\nabla v|^2_g+s_gv^2dv_g}{\int_Mu^{p_{n}-2}v^2dv_g}(\int_Mu^{p_n}dv_g)^{\frac{2}{n}}.$$
 Then, using the definitions of  $\lambda_{2,G}(g')$ and $vol(M,g')$   we get that 
 $$Y^2_G(M,[g]_G)=\inf_{g'\in [g]_{G,gen} }\lambda_{2,G}(g')vol(M,g')^{\frac{2}{n}}.$$

\section{Bounds for the second equivariant Yamabe constant}

The proof of Theorem \ref{strictinequalityA} and Proposition \ref{boundsproposition}  require some test functions that we will introduce in the next subsections.

\subsection{The test functions}

\subsubsection{The case $\omega=0$.}

$\ $
\medskip

In this subsection we recall the classical Aubin's test functions and their equivariant version. These are the test functions that we  use to prove Proposition \ref{boundsproposition}. They also work to prove Theorem \ref{strictinequalityA}, when the  Weyl tensor does not vanish at a  minimal finite orbit.

For $P\in M$ and $\delta>0$ small, let us  consider  the following function:
\begin{equation}\label{testfunction1}
 \phi_{P,\varepsilon}(Q)=C_{\varepsilon}\eta(Q)\Big(\frac{\varepsilon}{\varepsilon^2+r^2(Q)} \Big)^{\frac{n-2}{2}},
\end{equation}
where $\eta$ is a non-negative radial cut-off function centered at $P$, such that $|\eta|\leq 1$,  $\eta(Q)=1$ for $Q\in B_{\delta}(P)$,  $\eta(Q)=0$ for $Q\in M-B_{2\delta}(P)$, and $|\nabla \eta |\leq  2/\delta$; $C_{\varepsilon}$ is the unique positive constant such that $\|\phi_{P,\varepsilon}\|_{p_n} =1$; here  $r$ denote the distance to  $P$.
It is well known that  
\begin{equation}\label{limittestfucntion} J(\phi_{P,\varepsilon})
\longrightarrow_{\varepsilon \to 0} Y(S^{n}).
\end{equation}

Assume the orbit of $P$ induced by the action of $G$ is finite and its cardinality is $k$. Let us write it as $O_G(P)=\{P_1, P_2,\dots,P_k\}$, where $P_1=P$. For $\delta$ small enough, we set
\begin{equation}\label{equivtestfunction1}
 \phi_{\varepsilon}(Q):=C\sum_{i=1}^{k}\phi_{P_i,\varepsilon}(Q),
\end{equation}
where $C$ is a chosen constant such that  $\|\phi_{\varepsilon}\|_{p_n}=1 $.  The function $\phi_{\varepsilon}$ is $G-$invariant, and  by \eqref{limittestfucntion} we have that 

\begin{equation}\label{asymptotically}
J(\phi_{\epsilon})\longrightarrow_{\varepsilon \to 0} Y(S^{n})k^{\frac{2}{n}}.
 \end{equation}
 
Standard computations shows that 
\begin{equation}
cf(\varepsilon)\leq \int_{M} \phi_{P,\epsilon}^{q}dv_g \leq Cf(\varepsilon),
\end{equation}
where \begin{equation}\label{estimationpowerstest}f(\varepsilon):=
\begin{cases}
 \varepsilon^{\frac{2n-(n-2)q}{2}} & \mbox{if }\  q>\frac{n}{n-2},\\  \ln(\delta\varepsilon^{-1})\varepsilon^{\frac{n}{2}} & \mbox{if }\    q=\frac{n}{n-2},\\ \varepsilon^{\frac{(n-2)q}{2}} & \mbox{if }\   q<\frac{n}{n-2} .\end{cases}
 \end{equation}

 If $\omega(P)=0$,  we use the classical estimations for the Aubin's test functions  when $(M,g)$  non-locally conformally flat (see \cite{Aubin}) to conclude that
 
 \begin{equation}\label{estimationAubin}
 J(\phi_{\varepsilon}):=\begin{cases} Y(S^n)\Lambda_G^{\frac{2}{n}}-A(M)\varepsilon^4+o(\varepsilon^4) & \mbox{if }\  n>6,\\  Y(S^n)\Lambda_G^{\frac{2}{n}}-A(M)\varepsilon^4|ln(\varepsilon^2)|+o(\varepsilon^4|ln(\varepsilon^2)|) &  \mbox{if } \  n=6 ,\end{cases} 
 \end{equation}
where $A(M)$ is a positive constant.
Therefore,  for $\omega(P)=0$ and $\varepsilon$ small enough,  we obtain that $$J(\phi_{\varepsilon})<Y(S^n)\Lambda_G^{\frac{2}{n}}.$$

 The test functions $\phi_{\varepsilon}$ do not work to prove Theorem \ref{strictinequalityA} when the Weyl tensor of $(M,g)$ vanishes at any minimal finite orbit. Within the cases covered by the Theorem, this means that $1\leq\omega (P)\leq\frac{n-6}{2}$ for any $P$ in a minimal finite orbit.  In order to prove Theorem \ref{strictinequalityA},  we need to consider other kind of test functions that we introduce  in the next subsection.

\subsubsection{The case $\omega\leq \frac{n-6}{2}$.}\label{omegacond}

 $\ $
 
 \medskip
 
We now start by some preliminaries, in order to introduce the test function.   
We assume that there exists a point $P\in M$ with finite minimal orbit, such that $\omega(P)\leq\frac{n-6}{2}$. For simplicity, we drop the letter $P$ on $\omega(P)$. Consider the geodesic normal coordinate system $\{x^j\}$ around $P$, such that $\det g=1+O(|x|^N)$, for some $N\in\N$ sufficiently large. We define the polar coordinates as $r:=|x|$, $\xi:=\frac{x}{r}$. 
Hebey and Vaugon, proved that there exists a $G$-invariant  metric $g'$ conformal to $g$, such that $|\nabla^jRiem_{g'}(P)|=0$, for any $j<\omega$ and $\Delta^{j}s_{g'}(P)=0$, $\nabla\Delta^{j}s_{g'}(P)=0$, for any $j<\omega+1$ (see \cite[Lemma~12 and  Lemma~8~bis]{Hebey-Vaugon2}). 
 The last two equalities imply that $$r^{1-n}\int_{\partial B_r(P)}s_{g'}dv_{g'}=O(r^{2\omega+2}).$$
 
From now on, without loss of generality, we assume the $|\nabla^jRiem_{g}(P)|=0$, for any $j<\omega$. 

We denote by $\mu$ the degree of the leading part in the Taylor expansion of the scalar curvature $s_g$. Namely,
$$s_g=r^{\mu} \sum_{|\alpha|=\mu}\nabla_\alpha s_g(P)\xi^\alpha+O(r^{\mu+1}).$$
 Set $\bar s:=r^{\mu} \sum_{|\alpha|=\mu}\nabla_\alpha s_g(P)\xi^\alpha$.  Since all derivatives of $s_g$ at $P$ vanish up to the order $\omega-1$, it follows that $\mu\geq \omega$.   Moreover, if $\mu>\omega$, Aubin, in \cite{Aubin2}, proved that $\int_{\partial B_r(P)}s_gdv<0$ (see also \cite{Madani3}). The last inequality is sufficient to have the estimate \eqref{estimationAubin}, using the same test function $\phi_\e$, introduced above (see for instance in \cite{Madani}). 

Now, we consider the case $\mu=\omega$. Thus, $\bar s$ is a homogeneous polynomial of degree $\omega$, with zero average over the unit sphere, since $r^{1-n}\int_{\partial B_r(P)}s_gdv_g=O(r^{2\omega+2})$. 

We claim that $\Delta^{[\frac{\omega}{2}]}\bar s\equiv 0$, where $[\frac{\omega}{2}]$ is the integer part of $\frac{\omega}{2}$. Indeed, if $\omega$ is even, then $\Delta^{[\frac{\omega}{2}]}\bar s$ is a constant, which vanishes at $P$. If $\omega$ is odd, then $\Delta^{[\frac{\omega}{2}]}\bar s$ is linear form, with  $\nabla\Delta^{[\frac{\omega}{2}]}\bar s(P)=0$.  

On the other hand, for any homogeneous polynomial  $f$ of degree $k$,   $r^{2}\Delta f=\Delta_S f-k(n+k-2)f$, where $\Delta_S$ is the spherical Laplacian. Applying this identity for $f:=\Delta^{[\frac{\omega}{2}]-1}\bar s$, we obtain
$$0=\Delta^{[\frac{\omega}{2}]}\bar s=r^{-2}(\Delta_S-\nu_{[\frac{\omega}{2}]}{\rm id})\Delta^{[\frac{\omega}{2}]-1}\bar s=
r^{-2[\frac{\omega}{2}]}\prod_{k=1}^{[\frac{\omega}{2}]}(\Delta_S-\nu_{k}{\rm id})\bar s,$$
where $\nu_k:=(\omega-2k+2)(n+\omega-2k)$. It follows that
$$\bar s|_{S^{n-1}}\in \bigoplus_{k=1}^{[\frac{\omega}{2}]} E_k,$$
where $E_k$ is the eigenspace associated to the positive eigenvalues  $\nu_k$ of the Laplacian $\Delta_S$ on the sphere $S^{n-1}$. Hence, there exists $\varphi_k\in E_k$, such that $$\bar s=r^{\omega}\sum_{k=1}^{[\frac{\omega}{2}]}\nu_k\varphi_k=r^{\omega}\Delta_S\sum_{k=1}^{[\frac{\omega}{2}]}\varphi_k.$$

Now, we introduce the test function that we use to prove the estimate \eqref{estimationAubin}:
\begin{equation}
\psi_{P,\e}(Q)=(1+r^{\omega+2}\sum_{k=1}^{[\frac{\omega}{2}]}c_k\varphi_k(\xi))\phi_{P,\e}(Q), 
\end{equation}
where $c_k$ are some real constants.  
In the computation of Yamabe functional of $\psi_{P,\e}$, the integrals
$$\int_M|\nabla \psi_{P,\e}|^2dv_g\quad {\rm and }\quad \int_M\psi_{P,\e}^{p_n}dv_g$$
are computed, by a straightforward computation, in terms of the $L^2$-norm of the $\varphi_k$'s. However, the integral $$\int_{M}s_g\psi_{P,\e}^2dv_g$$ 
 has to be computed carefully. Indeed, using a tricky decomposition of the metric $g$, we estimate the following integral in terms of the $L^2(S^{n-1})$-norm of the $\varphi_k$'s:
\begin{gather*}
r^{1-n}\int_{\partial B_r(P)}s_gdv_g\leq r^{2\omega+2}\sum_{k=1}^{[\frac{\omega}{2}]}e_k\|\varphi_k\|_{L^2}^2+o(r^{2\omega+2}),
\end{gather*}
where $e_k:=􏰎	\left(\frac{n-3}{4(n-2)}- \frac{(n-1)^2+(n-1)(\omega+2)^2}{4(n-2)(\nu_k-n+1)}\right)\nu_k$ (see in \cite[Lemma 2.3]{Madani}). It yields that for $2\omega<n-6$
\begin{equation*}
\begin{split}
\int_{M}s_g\psi_{P,\e}^2dv_g &=\int_{B_\delta(P)}\hspace{-0.5cm}s_g\phi_{P,\e}^2dv_g+2\int_0^\delta\phi_{P,\e}^2 r^{\omega+2}\sum_{k=1}^{[\frac{\omega}{2}]} c_k\int_{\partial B_r(P)}\hspace{-0.5cm}\varphi_k\bar s d\sigma dr+O(\e^{n-2})\\
& \leq \e^{2\omega+4}c(n,\omega)
 \sum_{k=1}^{[\frac{\omega}{2}]}(e_k+2c_k\nu_k)\|\varphi_k\|^2_{L^2}+O(\e^{n-2}),
 \end{split}
\end{equation*}
where $c(n,\omega)$ is a positive constants, which depends on $n$ and $\omega$. Note that for $2\omega=n-6$, we have a similar estimate, as above, of order $\e^{2\omega+4}\ln \e^{-1}$.  
For $2\omega<n-6$, we obtain that 

\begin{multline}\label{ineq w<n-6/2}
J(\psi_{P,\e})\leq Y(S^n)+c(n,\omega)
 \sum_{k=1}^{[\frac{\omega}{2}]}(e_k(n-2)^2+d_kc_k^2+2(n-2)^2c_k\nu_k)\|\varphi_k\|^2_{L^2}\e^{2\omega+4}
 \\+O(\e^{n-2}),
\end{multline}  
where $d_k:=4\left[(n-1)(n-2)\nu_k-n(n-2)^2+(\omega+2)^2(n^2+n+2)\right]$.  Inequality \eqref{ineq w<n-6/2} holds, when $2\omega=n-6$, 
with $\e^{2\omega+4}\ln \e^{-1}$ instead of $\e^{2\omega+4}$. In  \cite[Lemme 3.1]{Madani}, it was proven that for $c_k:=-\frac{(n-2)^2}{d_k}\nu_k$, the real number
$$e_k(n-2)^2+d_kc_k^2+2(n-2)^2c_k\nu_k= (n-2)^2\left(e_k-\frac{(n-2)^2}{d_k}\nu_k^2\right) $$
is negative, for any $k\leq [\frac{\omega}{2}]$. 
Hence, for $\e$ sufficiently small $J(\psi_{P,\e})< Y(S^n)$.

Finally, let $H\subset G$ be the stabilizer of $P$. 
We claim that the function
\begin{equation}\label{testfunctionw}
\psi_\e:=\sum_{\sigma\in G/H}\psi_{P,\e}\circ\sigma^{-1}
\end{equation}
is $G$-invariant. Indeed $\bar s$ is $H$-invariant, since the scalar curvature is invariant under the action of the isometry group. On the other hands, for any $\sigma\in H$, $\sigma_*(P)\colon (T_PM,g_P)\rightarrow (T_PM,g_P)$ is a linear isometry, where $g_P$ is the Euclidean metric. Thus, $\sigma_*(P)$ preserves the spheres, the Laplacian and its eigenfunctions $\varphi_k$.    Therefore,  $\psi_{P,\e}$ is $H$-invariant and $\psi_\e$ is $G$-invariant. We conclude that there exists some positive constant $A(n,\omega,G)$ such that  \begin{equation}\label{HebeyVaugonconjw}
J(\psi_{\e})\leq\begin{cases}
Y(S^n)\Lambda_G^{\frac{2}{n}}-A(n,\omega,G)\e^{2\omega+4}, & {\rm if }\; 2\omega<n-6;\\
Y(S^n)\Lambda_G^{\frac{2}{n}}-A(n,\omega,G)\e^{n-2}\ln(\e^{-1}), & {\rm if }\; 2\omega=n-6, 
\end{cases}
\end{equation} 
which implies that  \eqref{estimationAubin} holds for $\psi_\e$.

 \subsection{Proof of Theorem \ref{strictinequalityA}}
 
$\ $

\medskip

By assumption, we assume that either the orbits of $G$ are not finite or there exists a  finite minimal orbit $O_G(P)$ such that $\omega(P)\leq \frac{n-6}{2}$. If  
the orbits are not finite, Hebey and Vaugon proved in \cite{Hebey-Vaugon3} that the inclusion map of $H^2_{1,G}(M)$ in $L^{p_n}_G(M)$ is a compact operator. Therefore, by the classical variational method,  $Y_G(M,[g]_G)$ is attained and the Yamabe equation admits a smooth positive $G-$invariant solution (see for instance \cite{Hebey-Vaugon2}). On the other hand, if there exists a point $P$ that belongs to a minimal finite orbit such that $\omega(P)\leq \frac{n-6}{2}$, then by \eqref {HebeyVaugonconjw} the $G-$equivariant Yamabe constant is attained too.   

Let $u\in H^2_1(M)-\{0\}$ and $V\in Gr^2(H_1^2(M))$. In order to simplify the notation in the proof of Theorem \ref{strictinequalityA},  we define the following functional $$H_u: V-\{0\}\longrightarrow \re,\, H_u(v):=\frac{\int_M a_n|\nabla v|^2_g+s_gv^2dv_g}{\int_Mu^{p_{n}-2}v^2dv_g}.$$

In order to prove the theorem, using Proposition  \ref{variationalcharact}, it is sufficient to find $u\in C^{\infty}_{>0,G}(M)$ and $V\in Gr^2(H_{1,G}^2(M))$ such that 
 $$\sup_{v\in V-\{0\}}H_u(v)(\int_Mu^{p_n}dv_g)^{\frac{2}{n}}<  \Big(Y_G(M,[g]_G)^{\frac{n}{2}} +Y(S^n)^{\frac{n}{2}}\Lambda_G\Big)^{\frac{2}{n}}.$$

 If $\Lambda_G=+\infty$, then \eqref{E2ndYbounds} is trivially satisfied. Hence, we can assume that $G$  has   a finite orbit on $M$.  Let $O_G(P)=\{P_1,\dots,P_k\}$ be a finite minimal orbit, with $P_1:=P$. Let $\varphi\in C^{\infty}_{>0,G}(M)$ be a minimizing  function of the Yamabe functional, which realizes the $G-$equivariant Ya\-ma\-be constant.  We normalize $\varphi$, such that $\|\varphi\|_{p_n}=1$.

Let us consider the following $G-$invariant function:
 \begin{equation}\label{2ndfunctiontest1}
 u_{\varepsilon}:=\begin{cases}
  J(\psi_{\varepsilon})^{\frac{n-2}{4}}{\psi}_{\varepsilon}+Y_G(M,[g]_G)^{\frac{n-2}{4}}\varphi, & {\rm if }\ Y_G(M,[g]_G)>0,\\
  {\psi}_{\varepsilon}, & {\rm if }\ Y_G(M,[g]_G)=0,
 \end{cases}
 \end{equation}
where   $\psi_{\varepsilon}$ is the test function defined in  \eqref{testfunctionw}, that we normalize, in order to have $\|\psi_{\varepsilon}\|_{p_n}=1$.
 
Let $V_{\varepsilon}\in Gr^2(C^\infty_G(M))$ be a $2-$dimensional subspace of $C^\infty_G(M)$, defined by 
\begin{equation}\label{subspacetest}
 V_{\varepsilon}:=span\big(\psi_{\varepsilon}, \varphi\big).
\end{equation}

\paragraph{\bf The case $\boldsymbol{Y_G(M,[g]_G)>0}$}  Assume that $Y_G(M,[g]_G)>0$ and for $v=\alpha \psi_{\varepsilon} +\beta \varphi\in V_{\varepsilon}$, with $(\alpha,\beta)\in\re^2-\{(0,0)\}$, we have that  
$$
  H_{u_\varepsilon}(v)=\frac{\alpha^2J(\psi_{\varepsilon})+\beta^2Y_G(M,[g]_G)+2\alpha\beta Y_G(M,[g]_G)\int_M|\varphi|^{p_n-2}\varphi \psi_{\varepsilon}dv_g}{\alpha^2\int_M|u_{\varepsilon}|^{p_n-2}\psi_{\varepsilon}^2dv_g+\beta^2\int_M|u_{\varepsilon}|^{p_n-2}\varphi^2dv_g+2\alpha\beta\int_M|u_{\varepsilon}|^{p_n-2}\psi_{\varepsilon}\varphi dv_g}.$$
 
Therefore, 
 \begin{equation}\label{bound}H_{u_\varepsilon}(v)\leq \frac{\alpha^2J(\psi_{\varepsilon})+\beta^2Y_G(M,[g]_G)+2\alpha\beta Y_G(M,[g]_G)\int_M|\varphi|^{p_n-2}\varphi \psi_{\varepsilon}dv_g}{\alpha^2J(\psi_{\varepsilon})+\beta^2Y_G(M,[g]_G)+2\alpha\beta\int_M|u_{\varepsilon}|^{p_n-2}\psi_{\varepsilon}\varphi dv_g}.\end{equation}

 Following closely the proof  of Theorem 5.4 in \cite{Ammann-Humbert}, for $n>6$, we can see that for any $v\in V_{\varepsilon}-\{0\}$, we have:
 $$H_{u_{\varepsilon}}( v)\leq 1+O(\varepsilon^{\frac{n-2}{2}} ).$$ 

It is known, see for instance Lemma 5.7 in \cite{Ammann-Humbert},  that if $q>2$ then, there exists $C>0$ such that $(a+b)^{q}\leq a^q+b^q+C(a^{q-1}b+ab^{q-1})$, for any $a,b\geq 0$.
Hence,  
\begin{multline*}
\int_Mu_{\varepsilon}^{p_n}dv_g\leq J(\psi_{\varepsilon})^{\frac{n}{2}}\int_M\psi_{\varepsilon}^{p_n}dv_g+Y_G(M,[g]_G)^{\frac{n}{2}}\int_M\varphi^{p_n}dv_g\\
+C(\int_M\varphi^{p_n-1}\psi_{\varepsilon}dv_g+\int_M\psi_{\varepsilon}^{p_n-1}\varphi dv_g)\\
=J(\psi_{\varepsilon})^{\frac{n}{2}}+Y_G(M,[g]_G)^{\frac{n}{2}}+C(\int_M\varphi^{p_n-1}\psi_{\varepsilon}dv_g+\int_M\psi_{\varepsilon}^{p_n-1}\varphi dv_g). 
\end{multline*}
 
By \eqref{estimationpowerstest}, we have that 
 \begin{equation}\label{volumestimation}
 \int_Mu_{\varepsilon}^{p_n}dv_g\leq J(\psi_{\varepsilon})^{\frac{n}{2}}+Y_G(M,[g]_G)^{\frac{n}{2}}+O(\varepsilon^{\frac{n-2}{2}}).
 \end{equation}
On the other hand, by the estimation \eqref{estimationAubin}, we have that   $$J(\psi_{\varepsilon})= Y(S^n)\Lambda_G^{\frac{2}{n}}-A\varepsilon^4+o(\varepsilon^4),$$
with $A>0$.
 Therefore, $$J(\psi_{\varepsilon})^{\frac{n}{2}}\leq Y(S^n)^{\frac{2}{n}}\Lambda_G-A\varepsilon^4+o(\varepsilon^4).$$ 
 and  $$\int_Mu_{\varepsilon}^{p_n}dv_g \leq Y(S^n)^{\frac{2}{n}}\Lambda_G+Y_G(M,[g]_G)^{\frac{n}{2}}-A\varepsilon^4+o(\varepsilon^4)+O(\varepsilon^{\frac{n-2}{2}}).$$
 Finally, we get that
 $$\Big(\int_Mu_{\varepsilon}^{p_n}dv_g\Big)^{\frac{2}{n}}\leq \Big(Y(S^n)^{\frac{n}{2}}\Lambda_G+Y_G(M,[g]_G)^{\frac{n}{2}}\Big)^{\frac{2}{n}}-\tilde{C}\varepsilon^4+O(\varepsilon^\frac{n-2}{2})+o(\varepsilon^4),$$
 with $\tilde{C}>0$.
If $n\geq 11$ and $\varepsilon$ is small enough, we obtain that
\begin{multline*}\sup_{v\in V_{\varepsilon}-\{0\}}H_{u_{\varepsilon}}(v)(\int_Mu_{\varepsilon}^{p_n}dv_g)^{\frac{2}{n}}
\leq \big(1+O(\varepsilon^{\frac{n-2}{2}})\big)\Big[\Big(Y(S^n)^{\frac{2}{n}}\Lambda_G+Y_G(M,[g]_G)^{\frac{n}{2}}\Big)^{\frac{2}{n}}\\
-\tilde{C}\varepsilon^4+O(\varepsilon^\frac{n-2}{2})\Big]
 <\Big(Y(S^n)^{\frac{2}{n}}\Lambda_G+Y_G(M,[g]_G)^{\frac{n}{2}}\Big)^{\frac{2}{n}}.
 \end{multline*}

\paragraph{\bf The case $\boldsymbol{Y_G(M,[g]_G)=0}$} For $n>4$, let $\alpha_{\varepsilon}$ and $\beta_{\varepsilon}$ be such that $\alpha^2_{\varepsilon}+\beta_{\varepsilon}^2=1$ and 
 $\sup_{v\in V_{\varepsilon}-\{0\}}H_{u_{\varepsilon}}(v)=H_{u_{\varepsilon}}(\alpha_{\varepsilon}\psi_{\varepsilon}+\beta_{\varepsilon}\varphi)$. If $\alpha_{\varepsilon}=0$, then $$\sup_{v\in V_{\varepsilon}-\{0\}}H_{u_{\varepsilon}}(v)=0.$$
 If $\alpha_{\varepsilon}\neq 0$, then applying \eqref{estimationpowerstest} we get that
 $$\sup_{v\in V_{\varepsilon}-\{0\}}H_{u_{\varepsilon}}(v)=
 \frac{J(\psi_{\varepsilon})}{1+2\big(\frac{\beta_{\varepsilon}}{\alpha_{\varepsilon}}\big)O(\varepsilon^{\frac{n-2}{2}})+\big(\frac{\beta_{\varepsilon}}{\alpha_{\varepsilon}}\big)^2O(\varepsilon^2)}.$$
 Since $\|u_{\varepsilon}\|_{p_n}=\|\psi_{\varepsilon}\|_{p_n}=1$,  by \eqref{estimationAubin} (for $n>6$) it follows that 
  $$\sup_{v\in V_{\varepsilon}-\{0\}}H_{u_{\varepsilon}}(v)\big(\int_Mu_{\varepsilon}^{p_n}dv_g\big)^{\frac{2}{n}}=\frac{Y(S^n)\Lambda_G^{\frac{2}{n}}-A\varepsilon^4+o(\varepsilon^4)}{1-O(\varepsilon^{n-4})},$$
 with $A>0$. Therefore, if $n-4>4$, for $\varepsilon$ small enough, we have
 $$\sup_{v\in V_{\varepsilon}-\{0\}}H_{u_{\varepsilon}}(v)\big(\int_Mu_{\varepsilon}^{p_n}dv_g\big)^{\frac{2}{n}}<Y(S^n)\Lambda_G^{\frac{2}{n}}$$
 and the theorem follows  \hfill $\square$
 

\begin{proof}[Proof of Proposition \ref{boundsproposition}]

The right hand side inequality is trivially satisfies if $\Lambda_G=+\infty$. So,  we assume that $\Lambda_G<\infty$. Let us consider the test function defined by    
 $$w_{\varepsilon}:=J(\phi_{\varepsilon})^{\frac{n-2}{4}}{\phi}_{\varepsilon}+Y_G(M,[g]_G)^{\frac{n-2}{4}}\varphi,$$ where $\phi_{\varepsilon}$ is as in \eqref{asymptotically},  $\varphi$ is a $G$-invariant function that realizes $Y_G(M,[g]_G)$ with $\|\varphi\|_{p_n}=1$, and $V_{\varepsilon}:=span (\phi_\e,\varphi)$. 
 By \eqref{volumestimation} we have that 
 \begin{equation}
  \lim_{\varepsilon \to 0}\big(\int_Mw_{\varepsilon}^{p_n}dv_g\big)^{\frac{2}{n}}=\Big(Y_G(M,[g]_G)^{\frac{n}{2}}+Y(S^n)^{\frac{n}{2}}\Lambda_G\Big)^{\frac{2}{n}}.
 \end{equation}
Therefore, it follows by \eqref{bound} that 
$$\limsup_{\varepsilon \to 0}\sup_{v\in V_{\varepsilon}-\{0\}}H_{w_{\varepsilon}}(v)\big(\int_Mw_{\varepsilon}^{p_n}dv_g\big)^{\frac{2}{n}}\leq \Big(Y_G(M,[g]_G)^{\frac{n}{2}}+Y(S^n)^{\frac{n}{2}}\Lambda_G\Big)^{\frac{2}{n}},$$
which implies that $$Y^2_G(M,[g]_G)\leq \Big(Y_G(M,[g]_G)^{\frac{n}{2}}+Y(S^n)^{\frac{n}{2}}\Lambda_G\Big)^{\frac{2}{n}}.$$

In  order to prove the lower bound $$2^\frac{2}{n}Y_G(M,[g]_G)\leq Y^2_G(M,[g]_G),$$ 
it is sufficient to show that 
\begin{equation}\label{lowerbound}
2^\frac{2}{n}Y_G(M,[g]_G)\leq \sup_{v\in V-\{0\}}H_{u}(v),
\end{equation}
for any $u\in C^{\infty}_{>0,  G}(M)$, $\|u\|_{p_n}=1$, and $V$ any 2-dimensional subspace of $H^2_{1, G}(M)$.   
The arguments used  in \cite[Proposition 5.6]{Ammann-Humbert} to prove the same estimate work  without any
signi\-fi\-cant changes to prove \eqref{lowerbound}. 
\end{proof}

\section{Nodal solutions with symmetry}\label{sectionnodalsolutions}

In \cite{Ammann-Humbert},  it was proved that if $Y^2(M,[g])$ is positive  and   attained, then there exists a nodal solution of the Yamabe equation.   
In this section,  we will prove Theorem \ref{nodalsolution} that asserts a similar fact, in the equivariant setting.
In the following, we briefly comment  what remains true, without any significant changes, in the $G-$equivariant second Yamabe context from \cite{Ammann-Humbert}.

Assume that $Y(M,[g])>0$.  Then,  the first eigenvalue $\lambda_{1}(g)$ of $L_g$ is positive and this implies that the conformal Laplacian is a coercive operator. In, particular $L_g$ is invertible. Let $G$ be a closed subgroup of $I(M,g)$.  For $u\in L^{p_n}_{\geq 0,G}(M)$, we consider the generalized $G-$invariant metric $g_u=u^{p_n-2}g$. 

By the standard variational method, we claim that there exist $v_1$ and  $v_2$ that belong to $H^2_{1,G}(M)$, which satisfy, in a weak sense, the following linear equations

\begin{gather}
L_g(v_1)=\lambda_{1,G}(g_u)u^{p_n-2}v_1,\label{v_1}
\\
L_g(v_2)=\lambda_{2,G}(g_u)u^{p_n-2}v_2,\label{v_2}
\end{gather} 
 and also satisfy
\begin{equation}\label{orth}
\int_Mu^{p_n-2}v_iv_jdv_g=\delta_{ij}.
\end{equation}

Actually, the subspace $V=span(v_1,v_2)$ realizes the infimum in the va\-ria\-tional characterization of $\lambda_{2,G}(g_u)$. More precisely, we have that $$\lambda_{2,G}(g_u)=\frac{\int_M a_n|\nabla v_2|^2_g+s_gv^2_2dv_g}{\int_Mu^{p_{n}-2}v^2_2dv_g}.$$

Suppose that $Y^2_G(M,[g]_G)>0$ is attained by the generalized metric $g_u$. Since we can assume that $\|u\|_{p_n}=1$, then  
$Y^2_G(M,[g]_G)=\lambda_{2,G}(g_u)$. Let $v_2$ as in \eqref{v_2}. Following closely the proof of Theorem 3.4 in \cite{Ammann-Humbert},  it can be proven that
$v_2$ is a function that changes sign. Indeed,  this is a consequence of the inequality $\lambda_{1,G}(g_u)<\lambda_{2,G}(g_u)=Y^2_G(M, [g]_G)$.

The key argument to prove the above facts is the following:  if  $w_1$, $w_2 \in H^2_{1,G}(M)-\{0\}$ are non-negative functions such that the set $\{w_1\neq 0\}\cap \{w_2\neq 0\} $  has measure $0$ and satisfy the following inequalities
\begin{gather}\int_M a_n|\nabla w_1|^2+s_gw_1dv_g\leq \ Y^2_G(M,[g]_G)\int_Mu^{p_n-2}w_1^2dv_g,\label{LIY1}
\\
\int_M a_n|\nabla w_2|^2+s_gw_2dv_g\leq \ Y^2_G(M,[g]_G)\int_Mu^{p_n-2}w_2^2dv_g,\label{LIY2}
\end{gather}
  then, $u\in span(w_1,w_2)$ and the equalities hold in \eqref{LIY1} and \eqref{LIY2} (see the details in \cite[Lemma~3.3]{Ammann-Humbert}).
Since $w_1=\sup (0,v_2)$ and $w_2=\sup (0,-v_2)$ satisfy \eqref{LIY1} and \eqref{LIY2}, it follows that there exist $a,\ b>0$, such that 
\begin{equation}\label{ee} 
u=a\sup (0,v_2)+b\sup(0,-v_2)
 \end{equation}

Then,  $v_2\in L^{p_n+\gamma}_G(M)$ for some $\gamma>0$ (see \cite[Lemma~3.1]{Ammann-Humbert}). Using a standard bootstrap argument  it can be proved that $v_2\in C^{2,\alpha}_G(M)$ for all $\alpha \in  (0,1)$. Therefore, by Equality \eqref{ee} we have that $u\in C^{0,\alpha}_G(M)$.

The same arguments used in the proof of Theorem 3.4 in \cite{Ammann-Humbert} can be used to prove that for any $h\in C^{\infty}_G(M)$ with $supp(h)\subset supp (u)$ the following equality holds 
\begin{equation}\label{condition}
\int_Mu^{p_n-3}v_2^2hdv_g=\int_Mu^{p_n-1}hdv_g.
\end{equation}

\begin{proof}[Proof of Theorem \ref{nodalsolution}]
 It is  sufficient to show that $u=|v_2|$. 
 Suppose that there exists $P_0\in M$, such that \begin{equation}\label{conditionpunt}u^{p_n-3}(P_0)v_2^2(P_0)> u^{p_n-1}(P_0).\end{equation}

Assume that the orbit  $O_G(P_0)$ is finite. For $\delta$ small enough, let us consider the $G-$equi\-variant function $\phi_{\varepsilon}$ defined as in \eqref{equivtestfunction1}, centered on the orbit of $P_0$, such that $supp (\phi_{\varepsilon})\subset supp (u)$. Since $u$ is a continuous function, for $\delta$ small enough, we have that 
$$u^{p_n-3}(Q)v_2^2(Q)\phi_{\varepsilon}(Q)> u^{p_n-1}(Q)\phi_{\varepsilon}(Q)$$ for any  $Q$ that belongs to the interior of  $supp(\phi_{\varepsilon})$.
Therefore,   we have that
$$\int_Mu^{p_n-3}v_2^2
\phi_{\varepsilon}dv_g>
\int_Mu^{p_n-1}\phi_{\varepsilon}dv_g$$
which, by \eqref{condition},  is a contradiction.

Now, let us assume that the orbit of $P_0$  is not finite. We know, since $G$ is a compact subgroup, that $O_G(P_0)$ is an embedded submanifold  of $M$, with $\dim O_G(P_0)\geq 1$. By the slice Theorem, given $\varepsilon>0$ small enough,  for any $P\in O_G(P_0)$ there exists a slice $$\Sigma_P:=\exp_P(\{z\in \nu(O_G(P_0)): |z|\leq \varepsilon\}),$$
such that for any $\sigma\in G$ $$\sigma.\Sigma_P=\Sigma_{\sigma . P}.$$

Then,  we can define $\zeta_\varepsilon$ a $G-$invariant function, with support in a tubular neighborhood $T_{\varepsilon}$ of $O_G(P_0)$ such that   $$u^{p_n-3}(Q)v_2^2(Q)\zeta_{\varepsilon}(Q)> u^{p_n-1}(Q)\zeta_{\varepsilon}(Q)$$ for any $Q\in T_{\varepsilon}$. Hence,
$$\int_Mu^{p_n-3}v_2^2\zeta_{\varepsilon}dv_g>\int_Mu^{p_n-1}\zeta_{\varepsilon}dv_g,$$  
which is again a  contradiction.
Of course, the same happens if we assume the   opposite inequality in \eqref{conditionpunt}. 
Therefore, $$u^{p_n-3}v_2^2= u^{p_n-1},$$
which implies that $u=|v_2|$.
\end{proof}

\begin{Remark} Let $(M,g)$ be a closed  Riemannian manifold with $\dim(M)\geq 3$ and $G$ be a closed subgroup of $I(M,g)$. If $Y^2_G(M,[g]_G)=0$, then we have that $Y_G(M,[g]_G)=Y(M,g)<0$. Indeed, 
 since $ Y_G(M,[g]_G)\leq Y^2_G(M,[g]_G)$, we have that $Y_G(M,[g]_G)\leq 0$, and therefore, $Y_G(M,[g]_G)=Y(M,[g])$ and the $G-$equivariant Yamabe constant is attained. It follows by definition of the second Yamabe constant that 
$$Y(M,[g])=Y_G(M,[g]_G)\leq Y^2(M,[g])\leq Y^2_G(M,[g]_G)=0.$$ 
 Let us assume that $Y(M,[g])=0$. Then, $\lambda_1(g)=0$  and $Y^2(M,[g])=0$. By \cite[Theorem 1.4]{Ammann-Humbert}, $Y^2(M,[g])$ is attained by a genelarized metric $h=u^{p_n-2}g$ with $u\in L^{p_n}_{\geq 0}(M)$. Hence, $\lambda_2(h)=0$. But this implies that $\lambda_2(g)=0$ wich is a contradiction, since $g$ is a Riemannian metric and therefore we have the strict inequality $\lambda_1(g)<\lambda_2(g)$.      
Then, $Y(M,[g])=Y_G(M,[g]_G)$ is a negative constant. 
 
\end{Remark}

\begin{Proposition}\label{nodalY2=0} Let $(M,g)$ be a closed Riemannian manifold of dimension $n\ge 3$ and $G$ be a closed subgroup of $I(M,g)$. Assume that 
$Y^2_G(M,[g]_G)=0$ and is attained. Then,
 there exists a $G-$invariant nodal solution of the Yamabe equation 
\begin{equation}\label{eigenfuncionzero}
L_g(v)=0.\end{equation}
 \end{Proposition}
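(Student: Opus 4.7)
My plan is to produce a $G$-invariant eigenfunction of $L_g$ with eigenvalue $0$ and then verify that it changes sign. First, by the Remark preceding the proposition, the hypothesis $Y^2_G(M,[g]_G)=0$ forces $Y_G(M,[g]_G)=Y(M,[g])$ to be strictly negative. Normalize the attainer $h=u^{p_n-2}g$ so that $\|u\|_{p_n}=1$; then $\lambda_{2,G}(h)=Y^2_G(M,[g]_G)=0$.

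Next, I would mimic the variational construction from the discussion preceding Theorem \ref{nodalsolution}. Applying the Lagrange-multiplier argument to a $2$-dimensional subspace $V\in Gr^2_u(H^2_{1,G}(M))$ realizing the min-max in \eqref{secondeigenvalueGen}, one obtains $v_2\in H^2_{1,G}(M)\cap V$ with $\int_M u^{p_n-2}v_2^2 dv_g=1$, satisfying in the weak sense
$$L_g(v_2)=\lambda_{2,G}(h)u^{p_n-2}v_2=0.$$
Since $v_2$ is $G$-invariant and $L_g$ commutes with the isometric action of $G$, the identity $\int_M v_2 L_g(\varphi)\,dv_g=0$, initially obtained only for $G$-invariant test functions $\varphi$, extends to arbitrary test functions by $G$-averaging. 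Thus $v_2$ is a weak solution of $L_g v=0$ on all of $M$, and standard linear elliptic regularity gives $v_2\in C^\infty(M)$ with $L_g v_2=0$ pointwise.

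Finally, I would verify that $v_2$ changes sign. Because $Y(M,[g])<0$, the first eigenvalue $\lambda_1(L_g)$ of the conformal Laplacian on $L^2(M,g)$ is strictly negative, and the associated eigenfunction $\phi_1$ is strictly positive (and automatically $G$-invariant, since the first eigenspace is one-dimensional and $G$ acts by isometries, so $\phi_1\circ\sigma=\phi_1$ for every $\sigma\in G$). Self-adjointness of $L_g$ combined with the fact that $0\neq\lambda_1(L_g)$ forces the $L^2(M,g)$-orthogonality $\int_M v_2\phi_1\,dv_g=0$. Since $\phi_1>0$ everywhere and $v_2\not\equiv 0$, the function $v_2$ must change sign, yielding the desired $G$-invariant nodal solution of $L_g v=0$.

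The delicate point is the existence of $v_2$ solving the weighted eigenvalue equation in the generalized setting where $u$ is only in $L^{p_n}_{G,\geq 0}(M)$; this is the same spectral-theoretic issue already treated in \cite{Ammann-Humbert} for the case $G=\{Id_M\}$, and those arguments transfer without change to the $G$-equivariant framework. Once $v_2$ is produced, everything else is a short consequence of self-adjointness and the positivity of the first eigenfunction.
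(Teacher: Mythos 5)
Your endgame (orthogonality of a $\lambda=0$ eigenfunction to the positive first eigenfunction of $L_g$, hence a sign change) is correct, and the $G$-averaging and regularity steps are fine. The problem is the step you yourself flag as "delicate" and then defer to \cite{Ammann-Humbert}: the existence of $v_2\in H^2_{1,G}(M)$ solving $L_g(v_2)=\lambda_{2,G}(h)u^{p_n-2}v_2$ weakly for the \emph{generalized} metric $h=u^{p_n-2}g$. The variational construction of $v_1,v_2$ in \cite{Ammann-Humbert} — and in the discussion preceding Theorem \ref{nodalsolution} in this paper — is carried out under the assumption $Y(M,[g])>0$, which makes $L_g$ coercive and the Rayleigh quotient bounded below even against the degenerate weight $u^{p_n-2}$. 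But as you observe in your first step, the hypothesis $Y^2_G(M,[g]_G)=0$ forces $Y(M,[g])<0$, so $L_g$ is \emph{not} coercive; for a weight $u$ that vanishes on a set where the Dirichlet spectrum of $L_g$ dips below zero, the quotient $\int_M(a_n|\nabla v|^2+s_gv^2)dv_g\,/\int_Mu^{p_n-2}v^2dv_g$ need not be bounded below, and the cited machinery does not transfer "without change." Knowing only that a $2$-plane $V_0$ achieves $\sup_{v\in V_0\setminus\{0\}}H_{u}(v)=0$ gives you a $v_0\in V_0$ with $\int_M a_n|\nabla v_0|^2+s_gv_0^2\,dv_g=0$, but not that $v_0$ is a critical point of the quotient on all of $H^2_{1,G}(M)$, i.e.\ not the Euler--Lagrange equation $L_gv_0=0$.

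The paper's proof sidesteps all of this with one observation you missed: the numerator of the Rayleigh quotient does not depend on the conformal factor. Since $\int_M a_n|\nabla v|^2+s_gv^2\,dv_g\le 0$ for every $v\in V_0$, with equality somewhere, the same $2$-plane tested against the \emph{smooth} metric $g$ gives $\lambda_{2,G}(g)\le 0$; the hypothesis $Y^2_G(M,[g]_G)=0$ gives $\lambda_{2,G}(g)\,vol(M,g)^{2/n}\ge 0$, hence $\lambda_{2,G}(g)=0$. One then takes the second $G$-invariant eigenfunction of the honest elliptic operator $L_g$, which exists by classical spectral theory and changes sign by exactly the orthogonality argument you give. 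If you replace your generalized-metric eigenfunction construction by this transfer to $g$, the rest of your write-up goes through.
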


 \begin{proof} Let $u_0\in L^{p_n}_{\geq 0, G}$ such that $\lambda_{2,G}(g_{u_0})=0$. Then, there exists $V_0\in Gr_{u}^2(H^2_{1,G}(M))$ such that $$\sup_{v\in V_0-\{0\}}H_{u_0}(v)=0.$$ 
This implies that $$0=\sup_{v\in V_0-\{0\}}H_1(v)\geq \lambda_{2,G}(g).$$
 By hypothesis, $\lambda_{2,G}(h)\geq 0$ for any generalized metric $h$, hence $\lambda_{2,G}(g)=0$. If we consider  an eigenfunction $v$ associated to the eigenvalue $\lambda_{2,G}(g)=0$, then $v$ satisfies Equation \eqref{eigenfuncionzero} and is a  sign changing function.    
\end{proof}

If $(M,g)$ satisfies the assumptions of Theorem \ref{strictinequalityA}, then as a consequence of the proposition above and Theorem \ref{Y2attained} (see Section \ref{SectproofY2attained}) we have a $G$-invariant nodal solution of the Yamabe equation (Corollary \ref{CorolNodalsolution}). 

  \section{Riemannian products}

\begin{proof}[Proof of Theorem \ref{asyntotic}]
 
The proof of Theorem  1.1  in \cite{A-F-P} works in the equivariant case. In this setting we have the action of the group $G$, however this action is the trivial one in $(N,h)$. Therefore, the proof of the  above mentioned theorem can be adapted to prove \eqref{itema}. For the same reason, Equality \eqref{itemb} follows from the proof of Theorem 1.1 in \cite{Henry}.
\end{proof}

Let $(M,g)$ and $G$ be a subgroup of $I(M,g)$ as in Theorem \ref{TC}.  Note that any  function in $C^{\infty}_0(M\times \re^l)$, which is constant on $M$,  is automatically $G-$in\-va\-riant, since $G$ acts trivially on $\re^l$. Therefore, 
$$Y_G(M\times \re^l,[g+g^l_e])\leq Y_{\re^l}(M\times \re^l,g+g^l_e),$$
where $Y_{\re^l}(M\times \re^l,g+g^l_e)$ is the $\re^l-$ Yamabe constant of $(M\times \re^l,g+g^l_e)$, defined by
$$Y_{\re^l}(M\times \re^l,g+g^l_e):=\hspace{-0.3cm}\inf_{u\in C^{\infty}_{0}(\re^l)-\{0\}}\hspace{-0.2cm}\frac{\int_{\re^l} a_{n+l}|\nabla u|_{g^l_0}^2+s_gu^2dv_{g^l_0}}{\|u\|_{p_{n+l}}^2} \big(vol(M,g)\big)^{\frac{2}{n+l}}.$$

If in addition, $s_g$ is constant and $vol(M,g)=1$, then by Theorem 1.4 of \cite{A-F-P} it follows that 
$$Y_G(M\times \re^l,[g+g^l_e]_G)\leq C(n,l)s_g^{\frac{n}{n+l}},$$
where $C(n,l)=a_{n+l}^{\frac{l}{n+l}}(n+l)l^{-\frac{l}{n+l}}n^{\frac{-n}{n+l}}\alpha_{n,l}^{-1}$ with
$\alpha_{n,l}$ the $(n,l)$ Niren\-berg-Gagliardo constant. Hence, if $s_g$ is small enough we have that
$$Y_G(M\times \re^l,[g+g^l_e]_G)\leq Y(S^{n+l})\Lambda_{G}^{\frac{2}{n+l}}.$$

If, in addition $l\geq 2$, Theorem \ref{TC} says that we have the strict inequality in the inequality above.

To prove Theorem \ref{TC}, we will need  a $G-$e\-qui\-va\-riant conformal normal coordinate system at $(P,0)\in (M\times \re^l)$. That is to say, $h$ is a $G-$in\-va\-riant Riemannian metric, that belongs to $[g+g^l_e]$ 
such that in a normal coordinate system we have:

\begin{itemize}
 \item If $n+l=4$, $\det(h)=1+O(r^3)$, (which is equivalent to have $Ricc_h(P,0)(h)=0$).
  \item If $n+l=5$, $\det(h)=1+O(r^4)$, (which is equivalent to have $Ricc_h(P,0)=0$ and $\nabla Ricc_h(P,0)=0$).
 \item If $n+l\geq 6$, For any $s>>1$,  $\det(h)=1+O(r^s)$ and $Ricc_h(P,0)=0$. 
\end{itemize}

In \cite{Hebey-Vaugon}, Hebey and Vaugon proved the existence of these kind of metrics for  the compact setting.  Here, $M\times \re^l$ is not compact, but the action of $G=G_1\times  \{Id_{\re^l}\}$ is trivial in $(\re^l, g_e^l)$. When $G$ is the trivial group of $I(M\times \re ,g+g^l_e)$, Inequality \eqref{TCineq} is proved by Akutagawa, Florit, and Petean in \cite{A-F-P}.

\begin{proof}[Proof of Theorem \ref{TC}]
We can assume that the action of $G_1$ on $M$ has finite orbits. If it does not, the inequality of the theorem holds trivially. Let $O_G(Q_1)=\{Q_1 =(P_1,0),\dots, Q_k=(P_k,0)\}$ be a minimal finite orbit.
Assume that $n+l\geq 6$. Note that $(M\times\re^l,g+g^l_0)$ is not locally conformally flat. Indeed, when both $\dim(M)=n$ and $\dim(N)=l$ are greater or equal than 2, the Riemannian product $(M\times N,g+h)$ is locally conformally flat if and only if  $(M,g)$ and $(N,h)$ have constant sectional curvature $c$ and $-c$, respectively (see \cite{Yau}).  By assumption, $s_g>0$. Hence, if  $(M,g)$ has constant sectional curvature it must be positive. Therefore,  $(M\times\re^l,g+g^l_0)$ is not locally conformally flat, which is equivalent to say that the  Weyl tensor $W_{g+g^l_e}$ never vanish completely. More precisely, since for any $P\in M$ $s_g(P)>0$,  then 
$W_{g+g^l_e}(P,0)\neq 0.$
Therefore, $\omega(Q_j)=0$ for any point $Q_j$ in the minimal orbit $O_G(Q_1)$.
The $G-$equivariant Yamabe constant is an invariant of $[g+g^l_e]_G$, then in order to prove the theorem it will be convenient to consider 
a $G-$equivariant conformal normal coordinate system $h$ instead of $g+g^l_e$.  Since the Weyl tensor is invariant by 
a conformal change of the metric,   then $\omega_h(Q_j)= 0$, for any $j=1,\dots,k$.


Let us consider, for small $\delta>0$,   the $G-$equivariant function $\phi_{\varepsilon}$ defined in \eqref{testfunction1}.  We extend $\phi_\e$ trivially to $M\times \re^l$. By \eqref{estimationAubin}, for $\varepsilon$ small enough,  the following inequality holds for the Yaamabe functional of $M\times \re^l$: 
  $$Y(\phi_{\varepsilon})<k^{\frac{2}{n+l}}Y(S^{n+l}),$$
see for instance \cite{Hebey-Vaugon}.
Assume now that $n+l=4,5$. Akutagawa, Florit and Petean proved in  \cite{A-F-P} that for any $\bar{P}\in M$ there exists a unique normalized Green function $G_{\bar{Q}}$ for the conformal Laplacian $L_{g+g^l_e}$ with pole at $\bar{Q}=(\bar{P},0)$. Also, they proved that $(M\times \re^l-\{\bar{Q}\},G_{\bar{Q}}^{\frac{4}{n+l-2}}(g+g^l_e))$, which is  a scalar-flat and asymptotically flat manifold, has positive mass. Then, after choosing a $G-$equivariant conformal normal coordinate system, we  use the Schoen's test function in order to construct an appropriate $G-$invariant test function. 
Let

$$\zeta_{\bar{Q}, \varepsilon}(Q):=
\begin{cases}
 \Big(\frac{\varepsilon}{\varepsilon^2+r^2(Q)}\Big)^{\frac{n+l-2}{2}} & \mbox{if }\  r(Q)\leq \delta,\\ 
   \varepsilon_0 \Big(G_{\bar{Q}}(Q)-\eta(Q)\alpha_{\bar{Q}}(Q)\Big)& \mbox{if }\   \delta\leq r(Q)\leq  2 \delta, \\
   \varepsilon_0 G_{\bar{Q}}(Q) &\mbox{if}\ r(Q)>2\delta,\end{cases}$$
where $r(Q):=d(\bar Q,Q)$, $\eta$ is a cut-off function as in \eqref{testfunction1},  $\varepsilon_0>0$ is a small constant  that satisfies
 $$\varepsilon_0=\Big[\frac{\varepsilon}{(\delta^{2-(n+l)}+A)^{\frac{2}{n+l-2}}(\varepsilon^2+\delta^2)}\Big]^{\frac{n+l-2}{2}},$$
 and $\alpha_{\bar{Q}}$ is the function that appears in asymptotic expansion of the Green function $G_{\bar{Q}}$ (see for instance   \cite{LP} and \cite{Schoen1}). Then,   we consider the $G-$equiva\-riant function in $M\times \re^l$ defined by 

$$\zeta_{\varepsilon}(Q):=\sum_{i=1}^k\zeta_{Q_i, \varepsilon}(Q).$$
By computations made in  \cite{Hebey-Vaugon} we get that for $\varepsilon$ small enough  
$$Y(\zeta_{\varepsilon})<Y(S^{n+l})k^{\frac{2}{n+l}},$$
and this proves the theorem.
\end{proof}

\begin{proof}[proof of Corollary \ref{strictineqforproduct}] By Theorem \ref{TC} we have that 
$$2^{\frac{2}{k}}Y_G(M\times \re^n,[g+g^l_e]_G)<\Big(Y_G(M\times \re^l,[g+g^l_e]_G)^{\frac{k}{2}} +Y(S^{k})^{\frac{k}{2}}\Lambda_G\Big)^{\frac{2}{k}},$$
with $k=n+l$.
Therefore, for $t$ large enough, by Theorem \ref{asyntotic}, we get the desired inequality.
\end{proof}

Corollary \ref{nodalsolproduct} is an immediate  consequence of the corollary above, Theorem \ref{Y2attained}, and Theorem \ref{nodalsolution}. 

\section{The subcritical case}\label{subcrisection}

Let $(M^n,g)$ and $(N^l,h)$ be closed Riemannian manifolds  of constant scalar curvature such that $s_{g+h}>0$. Assume that  $n+l\geq 3$. Let $G=G_1\times G_2$ be a compact subgroup of $I(M\times N, g+h)$ where   $G_1$ and $G_2$ are closed subgroups of $I(M,g)$ and $I(N,h)$, respectively. We also assume that the  action of $G_1$ on $M$ is not transitive.

When $G_1=\{Id_{M}\}$, Proposition \ref{subcritical} and Corollary \ref{subcriticalnodal} was proved by Petean in \cite{Peteannodal}. The key argument to prove this case is that $H^2_1(M)$ is compactly embedded  in $L^{p_{n+l}}(M)$, which is a consequence of the Rellich-Kondrakov theorem  since $p_{n+l}<p_{n}$ (see for instance \cite{LP}).

When $G_1$ is not the trivial group the situation is similar because the  inclusion of $H^2_{1,G}(M)$ in $L^{p_{n+l}}(M)$ is a compact operator (see \cite{Hebey-Vaugon3}).

In the following we sketch the proof  of Proposition \ref{subcritical}.

We can assume without loss of generality that $vol(N,h)=1$.
Let $g_{u_k}=u_k^{p_{n+l}}(g+h)$ with $u_k\in C^{\infty}_{G_1,>0}(M)$  and ${\|u_k\|_{p_{n+l}}}=1$ be  a minimizing sequence of $Y_{M,G_1}^2(M\times N,g+h)$. Let $\{v_1^k\}$ and $\{v_2^k\}$ 
as in \eqref{v_1} and \eqref{v_2}, respectively. The sequence $\{u_k\}$ is bounded in $L^{p_{n+l}}(M)$, therefore there exists a subsequence that converges weakly to $u$. On the other hand, $\{v_1^k\}$ and $\{v_2^k\}$ are bounded sequences in   $H^2_{1,G_1}(M)$. Hence, there exist $v_1$ and $v_2$ such that $v_k^1\rightharpoonup v_1 $ and  $v_k^2\rightharpoonup v_2 $ weakly in $H^2_{1,G_1}(M)$. Since $H^2_{1,G_1}(M)\subseteq L^{p_{n+l}}(M)$, we have that 
there exists a subsequence of $v_k^1$ and $v_k^2$ that converge strongly in $L^{p_{n+l}}(M)$. Then, we can pass into the limits in \eqref{orth} and we obtain that 
\begin{equation}\label{dim2}\int_Mu^{p_{n+l}}v_iv_j=\delta_{ij}.\end{equation}

Furthermore, $u$, $v_1$, and $v_2$ satisfy  in the sense of distributions that
\begin{equation}\label{E1}L_g(v_1)=\Big(\limsup_{k \to \infty}\lambda_{1, G_1}^M(g_{u_k})\Big)u^{p_{n+l}-2}v_1,
 \end{equation}
and
\begin{equation}\label{E2}L_g(v_2)=Y^2_{M,G_1}(M\times N, g+h)u^{p_{n+l}-2}v_2.
\end{equation}

By \eqref{dim2}, we have that $V_0:=span(v_1,v_2) \in Gr_u^2(H^2_{1,G_1}(M))$. Since \linebreak$\limsup_{k}\lambda_{1, G_1}^M(g_{u_k})\leq Y^2_{M,G_1}(M\times N, g+h)$,  by \eqref{E1} and \eqref{E2} we have that 
$$ sup_{v\in V_0-\{0\}}H_{u}(v)\big(\int_M u^{p_{n+l}}dv_g\big)^{\frac{2}{n+l}}\leq Y^2_{M,G_1}(M\times N, g+h).$$

Hence, $Y^2_{M,G_1}(M\times N, g+h)$ is achieved by the generalized metric $g_u$. In the variational characterization of the $G_1-$equivariant $M-$second Yamabe constant, the infimum is realized by the subspace $V_0$, and the supremum in $V_0$ is achieved by the $G_1-$invariant function $v_2$.

Corollary \ref{subcriticalnodal} follows by similar arguments as the ones used to proved Theorem \ref{nodalsolution} (see Section \ref{sectionnodalsolutions}).

\section{Proof of Theorem \ref{Y2attained} }\label{SectproofY2attained}

In order to prove Theorem \ref{Y2attained} we will need the following result due to Hebey and Vaugon (see \cite{Hebey}):

\begin{Theorem}\label{HVsob} Let $(M,g)$ be a closed Riemannian manifold of dimension $n\ge 3$ and $G$ be a closed subgroup of $I(M,g)$, with $\Lambda_G<\infty$. Then, there exists $B_0\in\re$, such that 
for any $v\in H^2_{1,G}(M)-\{0\}$, we have that
\begin{equation} Y(S^n)\Lambda_G^{\frac{2}{n}}\leq  \frac{\int_M a_n|\nabla v|^2_g+B_0v^2dv_g}{\|v\|_{p_n}^2}
\end{equation} 
\end{Theorem}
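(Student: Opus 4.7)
The plan is to argue by contradiction. Suppose that no such $B_0\in\re$ exists. Then for every $k\in\N$ there is $v_k\in H^2_{1,G}(M)$ with $\|v_k\|_{p_n}=1$ and
$$a_n\|\nabla v_k\|_2^2 + k\|v_k\|_2^2 < Y(S^n)\Lambda_G^{\frac{2}{n}}.$$
In particular $\|\nabla v_k\|_2$ stays bounded while $\|v_k\|_2\to 0$, so up to a subsequence $v_k\rightharpoonup 0$ in $H^2_{1,G}(M)$ and, by Rellich--Kondrachov, strongly in $L^q(M)$ for every $q<p_n$.

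Next I would apply the second concentration--compactness principle of P.-L.~Lions to $\{v_k\}$. Since the weak limit is zero, the accumulation measures satisfy (after extracting a further subsequence) $|v_k|^{p_n}dv_g\rightharpoonup\mu=\sum_{i\in I}c_i\delta_{P_i}$ with $c_i>0$, $\sum_i c_i=1$, and $|\nabla v_k|^2 dv_g\rightharpoonup\nu$, with $\nu\ge\sum_i d_i\delta_{P_i}$ and $a_n d_i\ge Y(S^n)c_i^{2/p_n}$ for each atom, the latter being an instance of Aubin's sharp Sobolev inequality on $\re^n$ applied after a local blow--up. The $G$-invariance of each $v_k$ passes to $\mu$, so the set of atoms is a union of $G$-orbits and $c_i$ is constant along each orbit; in particular each such orbit contains at least $\Lambda_G$ points.

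Now I would label the orbits by $j$, write $k_j\ge\Lambda_G$ for the cardinality of the $j$-th orbit, $\alpha_j$ for its common atomic mass, and set $t_j:=k_j\alpha_j\in[0,1]$, so that $\sum_j t_j=1$. Summing the lower bound on the atoms of $\nu$ yields
$$\liminf_{k\to\infty}a_n\|\nabla v_k\|_2^2\ge a_n\nu(M)\ge Y(S^n)\sum_j k_j\alpha_j^{\frac{2}{p_n}}=Y(S^n)\sum_j k_j^{\frac{2}{n}}t_j^{\frac{2}{p_n}}.$$
Since $\frac{2}{p_n}=\frac{n-2}{n}<1$ and $t_j\in[0,1]$, we have $t_j^{2/p_n}\ge t_j$; combined with $k_j^{2/n}\ge\Lambda_G^{2/n}$, the right-hand side is at least $Y(S^n)\Lambda_G^{2/n}\sum_j t_j=Y(S^n)\Lambda_G^{2/n}$. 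This contradicts $a_n\|\nabla v_k\|_2^2<Y(S^n)\Lambda_G^{2/n}$ and proves the theorem.

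The main obstacle I foresee is the sharp local Sobolev estimate $a_n d_i\ge Y(S^n) c_i^{2/p_n}$ at each atom. One must localize with a cutoff near $P_i$, pass to a normal chart, rescale at the concentration rate, and invoke the sharp Euclidean Sobolev inequality with optimal constant determined by $Y(S^n)$. The $G$-orbit structure enters twice: first, via $G$-invariance of $\mu$, forcing the atoms to form orbits of cardinality at least $\Lambda_G$; second, via the fact that atoms belonging to different $G$-orbits are separated by a positive distance on $M$, so the localizing cutoffs around different points may be taken with disjoint supports and the contributions add without interaction. Once these pieces are in place, the final algebraic inequality is an elementary consequence of $k_j\ge \Lambda_G$ and the concavity exponent $2/p_n<1$.
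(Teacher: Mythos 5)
Your setup via Lions' concentration--compactness is sensible, and the equivariant bookkeeping (atoms of $\mu$ forming $G$-orbits of cardinality at least $\Lambda_G$, masses constant along orbits, the algebra $k_j\alpha_j^{2/p_n}=k_j^{2/n}t_j^{2/p_n}$) is correct. But the final step does not produce a contradiction. From your hypothesis you only know $a_n\|\nabla v_k\|_2^2<Y(S^n)\Lambda_G^{2/n}$ for each $k$, hence in the limit $a_n\nu(M)\le Y(S^n)\Lambda_G^{2/n}$; your chain of estimates gives $a_n\nu(M)\ge Y(S^n)\Lambda_G^{2/n}$. These two are perfectly compatible: equality occurs when the sequence concentrates on a single orbit of cardinality exactly $\Lambda_G$ carrying the full mass $t=1$, in which case every inequality you invoke --- the sharp local bound $a_nd_i\ge Y(S^n)c_i^{2/p_n}$, the concavity bound $t^{2/p_n}\ge t$, and $k_j^{2/n}\ge\Lambda_G^{2/n}$ --- is saturated. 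So the assumed sequence is not excluded, and the proof does not close.

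What your argument does establish is the non-sharp version: for every $\epsilon>0$ there exists $B_\epsilon$ with $\big(Y(S^n)\Lambda_G^{2/n}-\epsilon\big)\|v\|_{p_n}^2\le\int_M a_n|\nabla v|^2+B_\epsilon v^2\,dv_g$ (running the same contradiction with $S-\epsilon$ on the left makes the two bounds genuinely incompatible). Upgrading $\epsilon$ to $0$ is exactly the content of the theorem and is the hard part: it is the equivariant analogue of the Hebey--Vaugon ``second best constant'' theorem. The paper does not reprove it; it quotes it from \cite{Hebey} (see also \cite{Hebey-Vaugon3}). There the proof is also by contradiction, but instead of stopping at the measure-level concentration--compactness bounds it carries out a fine blow-up analysis of the concentrating near-extremals (pointwise estimates on the rescaled functions, Moser-type iteration, comparison with the exact Euclidean extremals) precisely in order to rule out the borderline single-orbit concentration scenario that your argument leaves open. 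To repair your proof you would need to supply this second-order analysis; the first-order information extracted from Lions' lemma is not enough.
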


To prove Theorem \ref{Y2attained}, we follow closely the  proof of Theorem 1.4 in \cite{Ammann-Humbert},  pointing out what  we should adapt  to the equivariant setting.
As in the Section \ref{subcrisection}, 
let $g_{u_k}=u_k^{p_{n}}g$ with $u_k\in C^{\infty}_{G,>0}(M)$  and ${\|u_k\|_{p_{n}}}=1$ be  a minimizing sequence of $Y_{G}^2(M,[g]_G)$. Let $\{v_1^k\}$ and $\{v_2^k\}$ as in \eqref{v_1} and \eqref{v_2}, respectively. These functions satisfies in a weak sense the following:
\begin{gather}
L_g(v_1^k)=\lambda_{1,G}(g_{u_k})u^{p_n-2}_kv_1^k,
\\
L_g(v_2^k)=\lambda_{2,G}(g_{u_k})u^{p_n-2}_kv_2^k,\label{v_2k}
\\
\int_Mu^{p_n-2}v_iv_jdv_g=\delta_{ij}.
\end{gather} 

 The sequence $\{u_k\}$ is bounded in $L^{p_{n}}(M)$, therefore there exists a subsequence that converges to $u$ weakly in $L^{p_n}(M)$. On the other hand, $\{v_1^k\}$ and $\{v_2^k\}$ are bounded sequence in   $H^2_{1,G}(M)$, therefore, there exist $v_1$ and $v_2$ such that $v_k^1\rightharpoonup v_1 $ and  $v_k^2\rightharpoonup v_2 $ weakly in $H^2_{1,G}(M)$. Then (up to a subsequence) we have in a weak sense 
\begin{gather*}
L_g(v_1)=\big(\lim_{k\to +\infty} \lambda_{1,G}(g_{u_k})\big) u^{p_n-2}v_1,\\
L_g(v_2)=Y^2_G(M,[g]_G) u^{p_n-2}v_2.
\end{gather*}
However, it is not clear that $u^{p_n-2}v_1$ and $u^{p_n-2}v_2$ are linearly independent in $H^2_{1,G}(M)$. If it is the case, then they span a $2-$dimensional subspace of $H^2_{1,G}(M)$. It yields that $u^{p_n-2}g$ realizes $Y^2_G(M,[g]_G)$.
Under the assumptions of Theorem~\ref{Y2attained}, we show that $\dim \big( span\big(u^{p_n-2}v_1, u^{p_n-2}v_2\big)\big)=2$.

\begin{proof}[End of the proof of Theorem \ref{Y2attained}]
If $\Lambda_G=+\infty$, then $H^2_1(M)$ is compactly  embedded in $L^{p_n}(M)$, therefore a minimizing sequence (up to a subsequence) converge to a generalized metric $u^{p_n-2}g$ that minimizes $Y^2_G(M,[g]_G)$. So let us assume that $\Lambda_G<+\infty$. Let $v_2^k$ be as above and $P$ be an arbitrary point of $M$. Its orbit has finite or infinite cardinality. In the following, we distinguish two cases, corresponding to  the finiteness of the orbit of $P$.
\medskip
\paragraph{\em Finite case} Let $O_G(P):=\{P_1,\dots ,P_K\}$ be the orbit of $P$ (not necessarily a minimal one) of cardinality $K\geq \Lambda_G$.  We define 
$$V_{k,i}=\eta_i|v_2^{k}|^{\varepsilon}v_2^k,$$
where $0<\varepsilon<(p_n-2)/2$, and $\eta_i$ is a cut-off function centered at $P_i$ and depends only on the distance to $P_i$, such that ${\rm supp}\ \eta_i\subset B_\delta(P_i) $ for all $i$, with $\delta$ smaller than the half distance between $P_i$ and $O_G(P_i)-\{P_i\}$. Hence the function 
\begin{equation}\label{AC}
V_{k}=\sum_{i=1}^KV_{k,i}
\end{equation}
is $G-$invariant, by construction.
We claim that we obtain the same inequality as (39) in \cite{Ammann-Humbert}. That is 
\begin{equation}\label{ineqAC} 
\|V_k\|_{p_n}^2\leq \big(\frac{1+\varepsilon^2}{1-3\varepsilon^2}\big)\frac{Y^2_G(M,[g]_G)}{Y(S^n)\Lambda_G^{\frac{2}{n}}}\Big(\int_{\cup_{i=1}^KB_{\delta}(P_i)} u_k^{p_n}dv_g\Big)^{\frac{2}{n}}\|V_k\|_{p_n}^2+C_{\delta},
\end{equation} 
where $\delta$ is small enough and $C_{\delta}$ is a constant that depends on $\delta$.

Indeed, doing similar computations as Step 1 in the proof of Theorem 1.4 of \cite{Ammann-Humbert}, we obtain:
\begin{equation}\label{eq 42}
\int_M\eta_i^2|v_2^{k}|^{2\varepsilon}v_2^kL_g(v_2^k)dv_g\geq \big(\frac{1-3\varepsilon^2}{1+\varepsilon^2}\big)
\int_M a_n| \nabla V_{k,i}|^2+B_0V_{k,i}^2dv_g-C_{\delta}.
\end{equation}
Note that \eqref{eq 42} corresponds to Equation~(42) in \cite{Ammann-Humbert}, which holds for $v_2^k$ and $V_{k,i}$ in our notation, since these two functions satisfy the same assumptions as the functions $w_m$ and $W_m$  used in \cite{Ammann-Humbert}. Therefore,
\begin{multline*}\sum_{i=1}^K\int_M\eta_i^2|v_2^{k}|^{2\varepsilon}v_2^kL_g(v_2^k)dv_g\geq  \big(\frac{1-3\varepsilon^2}{1+\varepsilon^2}\big)
\sum_{i=1}^K\int_M a_n| \nabla V_{k,i}|^2+B_0V_{k,i}^2dv_g\\
-KC_{\delta}.
\end{multline*}
By \eqref{v_2k}, we have that $$Y^2_G(M,[g]_G)\int_Mu^{p_n-2}V_k^{2}dv_g\geq \big(\frac{1-3\varepsilon^2}{1+\varepsilon^2}\big)
\int_M a_n| \nabla V_{k}|^2+B_0V_{k}^2dv_g-KC_{\delta}$$
and  by Theorem \ref{HVsob}, we have
\begin{equation}\label{desigI1}Y^2_G(M,[g]_G)\int_Mu^{p_n-2}_kV_k^{2}dv_g\geq \big(\frac{1-3\varepsilon^2}{1+\varepsilon^2}\big)Y(S^n)\Lambda_G^{\frac{2}{n}}\|V_k\|_{p_n}^2-KC_{\delta}.
\end{equation} 
Using the H\"{o}lder inequality in the left hand side of \eqref{desigI1}, we get \eqref{ineqAC}. 
\medskip
\paragraph{\em Infinite case} Let us assume that $P$ does not belong to a finite orbit. As in the proof of Theorem \ref{nodalsolution}, we  construct a function  $\zeta$, which is $G-$invariant and  with support in a tubular neighbourhood $T_{\delta}(O_G(P))$ of  $O_G(P)$, for $\delta>0$ sufficiently small. Let us consider the $G-$invariant function 
\begin{equation}\label{AC2}\tilde{V_k}=\zeta|v_2^{k}|^{\varepsilon}v_2^k.
\end{equation}  
Then,  using the same arguments as above, 
we obtain that 
\begin{equation}\label{ineqAC2} 
\|\tilde{V}_k\|_{p_n}^2\leq \big(\frac{1+\varepsilon^2}{1-3\varepsilon^2}\big)\frac{Y^2_G(M,g)}{Y(S^n)\Lambda_G^{\frac{2}{n}}}\Big(\int_{T_{\delta}(O_G(P))} u_k^{p_n}dv_g\Big)^{\frac{2}{n}}\|\tilde{V}_k\|_{p_n}^2+C_{\delta}.
\end{equation}

First, we assume that  $Y^2_G(M,[g]_G)<Y(S^n)\Lambda_G^{\frac{2}{n}}$.  Note that this assumption does not correspond to the one of Theorem \ref{Y2attained}. The general case is treated below.
For $\e$ small enough and by \eqref{ineqAC} and \eqref{ineqAC2}, we have that
\begin{gather} 
\|V_k\|_{p_n}^2\leq c\|V_k\|_{p_n}^2+C_{\delta}\label{ineqACi},\\
\|\tilde{V}_k\|_{p_n}^2\leq c\|\tilde{V}_k\|_{p_n}^2+C_{\delta}\label{ineqACii},
\end{gather} 
where $c<1$. These inequalities implies that $\{v_2^k\}$  is a bounded sequence in $L^{p_n+\varepsilon}(M)$. Since, $v_2^k$ converge strongly in $L^{p_n-\varepsilon}(M)$,  we can use the same arguments used in Step 2 (proof of Theorem 1.4, \cite{Ammann-Humbert}) to conclude that $v_2^k$ converge strongly to $v_2$ in $L^{p_n}(M)$. Hence, we obtain  that 
$$\int_Mu^{p_n-2}v_iv_jdv_g=\delta_{ij}.$$ 
This implies that $\dim\big(span \big(u^{p_n-2}v_1, u^{p_n-2}v_2\big)\big)=2$.

Now, we assume that 
$$Y^2_G(M,[g]_G)<\big(Y_G(M,[g]_G)^{\frac{n}{2}}+Y(S^n)^{\frac{n}{2}}\Lambda_G\big)^{\frac{2}{n}}.$$ As in Step 3
(proof of Theorem 1.4 in \cite{Ammann-Humbert}) we can use \eqref{ineqAC} and \eqref{ineqAC2} to prove that, there exists a closed set $\Omega\subset M$, such that for any open set $U$, with $\bar{U}\subseteq M-\Omega$, the  
sequences $\{v_1^k\}$ and $\{v_2^k\}$ converge strongly in $H^2_{1,G}(\bar{U})$ to $v_1$ and $v_2$, respectively.  $\Omega$ is the set of concentration points, {\em i.e.}, $P\in \Omega$, if for all $\delta>0$ $$\limsup_{k\to \infty }\int_{B_{\delta}(P)}u_k^{p_n}dv_g>\frac{1}{2}.$$ 
If $\Omega=\emptyset$, then by comments above implies that dimension of the subspace $span \big(u^{p_n-2}v_1, u^{p_n-2}v_2\big)$ is $2$, and the theorem follows.

Let assume that $\Omega\neq\emptyset$. Since $\|u_k\|_{p_n}=1$, the cardinality of $\Omega$ is at most one. 
 If $\Omega=\{P\}$, then $ O_G(P)=\{P\}$ and $\Lambda_G=1$, since the image of a concentration point by an isometry is also a concentration point. If $u^{p_n-2}v_1$ and $u^{p_n-2}v_2$ are not linearly independent, then
 we can mimic, without substantial changes due to the equivariant setting,  Step 4 (Actually Step 4, is true without assuming neither the linear dependence of $u^{p_n-2}v_1$ and $u^{p_n-2}v_2$ nor  $\Omega\neq \emptyset$), Step 5, and Step 6  of (proof of Theorem 1.4, \cite{Ammann-Humbert}), to obtain that 
$$Y^2_G(M,[g]_G)\geq \big(Y_G(M,[g]_G)^{\frac{n}{2}}+Y(S^n)^{\frac{n}{2}}\big)^{\frac{2}{n}}$$
which is a contradiction to our assumption. Therefore, $u^{p_n-2}v_1$ and $u^{p_n-2}v_2$ are  linearly independent, and  the theorem  follows in this case too.

\end{proof}

\end{document}